\documentclass[11pt]{gtpart}
\usepackage{pinlabel} 
\usepackage{amsmath}
\usepackage{amsfonts}
\usepackage{amssymb}
\usepackage{amscd}
\usepackage{fancyvrb}
\usepackage[all]{xy}
\usepackage{tikz}
\usepackage{amsthm}
\usetikzlibrary{matrix}
\usepackage{graphicx}
\usepackage{epsfig}

\newtheorem{theorem}{Theorem}[section]
\newtheorem{proposition}[theorem]{Proposition}
\newtheorem{corollary}[theorem]{Corollary}
\newtheorem{lemma}[theorem]{Lemma}
\newtheorem{definition}[theorem]{Definition}

\newtheorem{remark}[theorem]{Remark}
\newtheorem{conjecture}[theorem]{Conjecture}

\newcommand{\ktheory}{$K$-theory }
\newcommand{\dbZ}{\mathbb{Z}}
\newcommand{\dbQ}{\mathbb{Q}}
\newcommand{\dbK}{\mathbb{K}}
\newcommand{\dbC}{\mathbb{C}}

\newcommand{\dbF}{\mathbb{F}}
\newcommand{\dbR}{\mathbb{R}}

\newcommand{\calO}{{\mathcal O}}

\newcommand{\calF}{{\mathcal F}}

\newcommand{\calK}{{\mathcal K}}

\newcommand{\dbH}{{\mathbb H}}

\newcommand{\philbert}{PSL_2(\mathcal{O}_k)}
\newcommand{\hilbert}{SL_2(\mathcal{O}_k)}
\newcommand{\pchain}{$p$-chain spectral sequence }
\newcommand{\orf}[1]{Or(#1,\calF)}

\newcommand{\orfin}[1]{Or(#1,\fin)}
\newcommand{\pt}[1]{*_{#1}}
\newcommand{\hompt}[3]{H_n^{Or(#1)}(\pt{#2},\pt{#3};\dbK)}
\newcommand{\assembly}[2]{H_n^{Or(#1)}(\pt{#2};\dbK)}

\newcommand{\vcyc}{V\text{\tiny{\textit{CYC}}}}
\newcommand{\fin}{F\text{\tiny{\textit{IN}}}}
\newcommand{\All}{A\text{\tiny{\textit{LL}}}}
\newcommand{\func}[3]{#1:#2\to#3}

\usepackage{anysize}
\newcommand{\nbeq}{\begin{equation}}
\newcommand{\neeq}{\end{equation}}
\newcommand{\beq}{\begin{equation*}}
\newcommand{\eeq}{\end{equation*}}

\title[$K$-theory of Hilbert modular group]{On the algebraic $K$-theory of the Hilbert modular group}
\author{Mauricio Bustamante}
\givenname{Mauricio}
\surname{Bustamante}
\address{Department of Mathematical Sciences\\
Binghamton University\\\newline
4400 Vestal Pkwy E\\ Binghamton, NY 13902\\USA}
\email{bustamante@math.binghamton.edu}
\author{Luis Jorge S\'anchez Salda\~na}
\givenname{Luis Jorge}
\surname{S\'anchez Salda\~na}
\address{Centro de Ciencias Matem\'aticas\\
UNAM, Campus Morelia \\\newline
Morelia \\Michoac\'an C.P. 58190\\M\'exico}
\email{luisjorge@matmor.unam.mx}

%
\begin{document}
\begin{abstract}
We give formulas for the Whitehead groups and the rational $K$-theory groups
of the (integer group ring of the) Hilbert modular group in terms of its
maximal finite subgroups.
\end{abstract}
\maketitle
\section{Introduction}
The algebraic $K$-theory of the integral group ring of a discrete group
$G$ is known to encode topological invariants of (high dimensional) manifolds $X$ or spaces whose
fundamental group $\pi_1X$ is isomorphic to $G$. Examples come in many guises:
the obstructions to a finitely dominated space having the homotopy type of 
finite $CW$-complex and to an open smooth manifold being the interior of a compact
smooth manifold with boundary are elements of $\widetilde{K}_0(\dbZ G)$ 
(or $Wh_0(G)$ in our notation below). The obstruction to an $h$-cobordism admitting
a product structure $X\times I$ is an element in the Whitehead group $Wh(G)$ 
(certain quotient of $K_1(\dbZ G)$ denoted in this article by $Wh_1(G)$) 
and the uniqueness up to isotopy of these product structures on a compact $h$-cobordism 
has to do with a quotient of $K_2(\dbZ G)$ named $Wh_2(G)$.
In fact an entire sequence of groups $Wh_n(G)$ arising from the higher algebraic
$K$-theory of a group ring can be defined to contain the
type of invariants needed to deal with parametrized phenomena in topology, e.g.
pseudoisotopy theory. These groups $Wh_n(G)$ associated to a group $G$ are called 
\textit{Whithead groups of} $G$ and appeared defined (for larger $n$) 
by Waldhausen in \cite{W78}.
Thus, explicit calculations
of algebraic $K$-theory groups and their associated Whitehead groups $Wh_n$ 
become relevant in geometric topology. 
Unfortunately such calculation is generally a difficult
task to complete, for instance the algebraic $K$-theory of $\dbZ$ is not 
known yet in full generality. However, sometimes it is possible to approach the lower
$K$-theory and the higher $K$-theory modulo torsion via certain generalized
homology theory. One instance where this can be done is the integral
group ring of the Hilbert modular group $\philbert$. In fact, we prove:

\begin{theorem}\label{philbert}
Let $k$ be a totally real number field of finite degree and let $\mathcal{O}_k$ be its ring of algebraic integers. Then for all $q$,
$$Wh_q(\philbert)\cong \bigoplus_{(M)} Wh_q(M)$$
where the sum runs over the conjugacy classes of maximal finite subgroups of $\philbert$.
\end{theorem}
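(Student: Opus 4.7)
The plan is to apply the $K$-theoretic Farrell--Jones Conjecture to $G=\philbert$. Since $G$ is a non-uniform lattice in the semisimple Lie group $PSL_2(\dbR)^{[k:\dbQ]}$, the Conjecture holds for $G$ by the general results on lattices in virtually connected Lie groups due to Bartels--Farrell--Lück and Kammeyer--Lück--Rüping. Hence the assembly map $H_n^{Or(G)}(E_{\vcyc}G;\dbK) \to K_n(\dbZ G)$ is an isomorphism, and taking cofibers along the trivial family classifying space $EG$ identifies $Wh_q(G)\cong H_q^{Or(G)}(E_{\vcyc}G, EG;\dbK)$.

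Using the factorization $EG\to E_{\fin}G\to E_{\vcyc}G$, the computation splits into two reductions. First I would pass from $\vcyc$ to $\fin$ by showing that the relative assembly map $H_n^{Or(G)}(E_{\fin}G;\dbK)\to H_n^{Or(G)}(E_{\vcyc}G;\dbK)$ is an isomorphism. This is controlled by the Bass and Waldhausen Nil-groups of the infinite virtually cyclic subgroups of $G$. In $\philbert$, such subgroups arise only from parabolic or hyperbolic elements and have a restricted structure; arguing as in the original Farrell--Jones computations for arithmetic groups, one expects these Nil contributions to vanish (at least at the Whitehead level).

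Second, I would analyze $H_q^{Or(G)}(E_{\fin}G, EG;\dbK)$ via two structural properties of finite subgroups of $\philbert$ that must be established: every nontrivial finite subgroup is contained in a unique maximal finite subgroup, and each maximal finite subgroup $M$ is self-normalizing (so $N_G(M)=M$). Granted these, a standard construction (cf.\ Davis--Lück, Lück) produces a $G$-pushout with $E_{\fin}G$ as the pushout of $\coprod_{(M)} G\times_M EM \leftarrow \coprod_{(M)} G/M \to EG$. Identifying the cofibers of the two horizontal maps yields the isomorphism
$$H_q^{Or(G)}(E_{\fin}G, EG;\dbK)\;\cong\;\bigoplus_{(M)}H_q^{Or(M)}(EM, *;\dbK)\;=\;\bigoplus_{(M)} Wh_q(M),$$
which, combined with the previous step, gives the theorem.

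The main obstacle will be verifying the structural properties of maximal finite subgroups of $\philbert$, especially self-normalization, which involves a careful analysis of roots of unity in $k$ and the classification of finite subgroups of $PSL_2$ together with their normalizers in $\philbert$; the Bianchi case ($k$ imaginary quadratic) offers a template, but the totally real setting requires its own treatment. A secondary difficulty is the $\vcyc$-to-$\fin$ reduction: the Nil-type contributions from infinite virtually cyclic subgroups are not a priori zero and require dedicated analysis of the cokernel of $Wh_q(F)\to Wh_q(V)$ for each infinite virtually cyclic $V$ with finite part $F$.
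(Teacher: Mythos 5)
Your outline is essentially the paper's proof: verify the Farrell--Jones conjecture via the lattice results of Kammeyer--L\"uck--R\"uping, reduce from $\vcyc$ to $\fin$ to the maximal finite subgroups, and use properties (M) and (NM) to identify the relative term with $\bigoplus_{(M)} Wh_q(M)$. The only genuine divergence is cosmetic: you package the last step as a $G$-pushout for $E_{\fin}G$ and a cofiber identification, while the paper runs the Davis--L\"uck $p$-chain spectral sequence for the pair $(\ast_{M\fin},\ast_{Tr})$, where (M) kills all $p$-chains with $p\geq 2$, (NM) makes $Aut(G/M)$ trivial, and passing to the pair discards the $1$-chains, so the sequence collapses at $E^1$. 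Both devices extract the same information from (M) and (NM).

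The two points you flag as obstacles are real gaps in your writeup, but both are resolved by a single geometric fact that you should isolate: every nontrivial finite-order element of $\philbert$ acts on $\dbH^{[k:\dbQ]}$ with a \emph{unique} fixed point, and the stabilizer of any point is finite cyclic. From this, (M) and (NM) follow in a few lines with no number theory at all: a finite subgroup $F$ fixes a point $z$, so $F\subseteq \Gamma_z$, and $\Gamma_z$ is maximal since any larger finite group would fix some $y$, forcing $y=z$ by uniqueness; similarly $g\in N(\Gamma_z)$ implies $g^{-1}z$ is fixed by all of $\Gamma_z$, hence $g^{-1}z=z$ and $g\in\Gamma_z$. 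Your proposed route through roots of unity in $k$ and normalizers of finite subgroups of $PSL_2$ is much heavier than necessary. The same fact also disposes of your ``secondary difficulty'': under (M) and (NM), an infinite virtually cyclic $V$ with finite normal part $F\neq 1$ would satisfy $V\subseteq N(F)\subseteq M$ for the unique maximal finite $M\supseteq F$, which is absurd; hence every infinite virtually cyclic subgroup is $\dbZ$ or $\dbZ_2\ast\dbZ_2$, and the relevant Nil groups vanish by known results (Bass, Waldhausen; see Lemma 2.5 of L\"uck--Stamm), so the $\vcyc$-to-$\fin$ reduction holds in all degrees, not merely ``at the Whitehead level.'' With these two lemmas supplied, your argument is complete and correct.
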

As for the non-projective Hilbert modular group $\hilbert$, we find that
its Whitehead group is determined by $K_1(\dbZ[\philbert])$. In fact, we have:
\begin{theorem}\label{hilbert}
Let $k$ be a totally real number field of finite degree and let $\mathcal{O}_k$ be its ring of algebraic integers. Then 
$$Wh_q(\hilbert)\cong K_q(\dbZ [\philbert])$$
for $q\leq 1$.
\end{theorem}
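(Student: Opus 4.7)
The strategy is to combine the Farrell--Jones conjecture for $\hilbert$ with Theorem \ref{philbert}, using the central extension
$$1\to\dbZ/2\to\hilbert\xrightarrow{\pi}\philbert\to 1,$$
in which $\dbZ/2=\{\pm I\}$ is central. The key observation is that since $\ker\pi$ is finite, a subgroup $H\le\hilbert$ is finite (resp.\ virtually cyclic) if and only if $\pi(H)$ is. Hence any model for $E_\mathcal{F}\philbert$ (with $\mathcal{F}\in\{\fin,\vcyc\}$), viewed as an $\hilbert$-space through $\pi$, is simultaneously a model for $E_\mathcal{F}\hilbert$. Stabilizers on the $\hilbert$-side are preimages under $\pi$ of those on the $\philbert$-side and always contain $\{\pm I\}$; in particular every maximal finite subgroup $\tilde M\le\hilbert$ has the form $\pi^{-1}(M)$ for $M$ a maximal finite subgroup of $\philbert$, and sits in a central extension $1\to\dbZ/2\to\tilde M\to M\to 1$.

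Next, the Farrell--Jones conjecture is known for $\hilbert$ (it is a lattice in $SL_2(\dbR)^{[k:\dbQ]}$, hence a $\mathrm{CAT}(0)$-group). Running the same orbit-category / \pchain argument that was used to prove Theorem \ref{philbert}, now applied to $\hilbert$ with the classifying-space models from the previous paragraph, yields a decomposition
$$Wh_q(\hilbert)\cong\bigoplus_{(\tilde M)} Wh_q(\tilde M),$$
where the sum runs over conjugacy classes of maximal finite subgroups of $\hilbert$.

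For $q\le 1$, one must match this sum with $K_q(\dbZ[\philbert])$. The plan is to use each central extension $\dbZ/2\to\tilde M\to M$ to split $Wh_q(\tilde M)\cong Wh_q(M)\oplus A_M$, where $A_M$ is the piece coming from the nontrivial character of the central $\dbZ/2$. Summing over $\tilde M$ and invoking Theorem \ref{philbert}, the $Wh_q(M)$-summands reassemble into $Wh_q(\philbert)$, while the $A_M$-summands reassemble into the subgroup $\pm\philbert\subset K_q(\dbZ[\philbert])$ that is killed upon passing from $K_q$ to $Wh_q$. Altogether this gives the required isomorphism $Wh_q(\hilbert)\cong K_q(\dbZ[\philbert])$.

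The main obstacle is precisely this last identification: proving the required integral splitting $Wh_q(\tilde M)\cong Wh_q(M)\oplus A_M$ from the central extension, and then matching the assembled $A_M$'s precisely with $\pm\philbert\subset K_q(\dbZ[\philbert])$. Rational splittings via the idempotents $(1\pm(-I))/2\in\dbZ[1/2][\tilde M]$ are straightforward but lose exactly the $2$-torsion that distinguishes $K_q$ from $Wh_q$ here, so a direct integral low-degree $K$-theoretic analysis of the finite group rings $\dbZ[\tilde M]$ is required.
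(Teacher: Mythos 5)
Your proposal breaks down at the step where you claim $Wh_q(\hilbert)\cong\bigoplus_{(\tilde M)}Wh_q(\tilde M)$ by ``running the same orbit-category argument that was used to prove Theorem \ref{philbert}.'' That argument depends essentially on conditions (M) and (NM) of Lemma \ref{MNM}, and condition (M) fails for $\hilbert$: the central subgroup $\{\pm I\}$ is contained in \emph{every} maximal finite subgroup, so it is not contained in a unique one. By Theorem \ref{reducemax} the reduced family $M\fin$ for $\hilbert$ must therefore contain $\{\pm I\}$ as a separate object, and the restricted orbit category acquires $1$-chains $\{\overline{G/\{\pm I\}},\overline{G/\tilde M}\}$ and $2$-chains; moreover $Aut(G/\{\pm I\})=\hilbert/\{\pm I\}=\philbert$ is infinite, so the $0$-chain $\{\overline{G/\{\pm I\}}\}$ contributes a term of the form $H_q(B\philbert;\dbK(G/\{\pm I\}))$ to the $E^1$-page, which is not a sum over finite subgroups. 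The claimed decomposition is in fact demonstrably false: it would force $Wh_0(\hilbert)$ to be finite (since $Wh_0(\tilde M)=\widetilde K_0(\dbZ[\tilde M])$ is finite for finite $\tilde M$), whereas Corollary \ref{WhSL} gives $Wh_0(\hilbert)\cong Wh_0(\philbert)\oplus\dbZ$. Consequently the subsequent program of splitting each $Wh_q(\tilde M)$ along the central extension $\dbZ/2\to\tilde M\to M$ cannot recover $K_q(\dbZ[\philbert])$, which contains the full homology $H_q(B\philbert;\calK(\dbZ))$ and not merely contributions from finite subgroups.

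The paper's actual route avoids this entirely: after reducing (for $q\le 1$, which is where the restriction enters --- via the vanishing of the relevant Nil terms for the groups $\dbZ$ and $\dbZ_2\times\dbZ$ occurring as $F\rtimes\dbZ$ in $\hilbert$; your outline never explains why $q\le 1$ is needed) to the family $M\fin=\{1,\{\pm I\},\text{maximal finite subgroups}\}$, one computes the \emph{relative} homology $H_q^{Or(G)}(\ast_{M\fin},\ast_{Tr};\dbK)$. Passing to the pair discards all chains whose least element is $\overline{G/1}$, and the remaining category $Or(\hilbert,M\fin\setminus Tr)$ is equivalent to $Or(\philbert,M\fin_{\philbert})$, so the relative \pchain for $\hilbert$ (converging to $Wh_*(\hilbert)$) is isomorphic to the absolute one for $\philbert$ (converging to $K_*(\dbZ[\philbert])$). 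If you want to salvage your approach, you would need to redo the chain analysis for $\hilbert$ with $\{\pm I\}$ included in the family and identify this categorical equivalence; no integral $K$-theoretic analysis of the finite group rings $\dbZ[\tilde M]$ is required.
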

Although the higher $K$-theory groups of $\philbert$ seem to be harder
to compute, a lot more can be said about its higher rational $K$-theory. For a finite
group $F$, let
\begin{itemize}
\item $r(F)$ denote the number distinct real irreducible representations of $F$, 
\item $c(F)$ be the number of those real representations that are of complex type,
\item $q(F)$ be the number of distinct rational irreducible representations of $F$,
\item $k_p(F)$ be the number of irreducible representations of $F$ over
the $p$-adic numbers $\dbQ_p$ and
\item $r_p(F)$ be the number of irreducible representations of $F$ over the field with $p$ elements $\dbF_p$.
\end{itemize}
Denote the rank of an abelian group by $\text{rk}$.
Then we have the following
\begin{theorem}\label{rational}
Let $k$ be a totally real number field of finite degree and let $\mathcal{O}_k$ be its ring of algebraic integers. If $G=\philbert$, then 
\beq
\text{rk}\left(K_q(\dbZ[G])\right)-\text{rk}\left(H_q(BG;\mathcal{K}(\dbZ))\right)=
\begin{cases}
\displaystyle\sum_{(M)}r(M)-m &\hspace{-3.5cm}\text{ if } q>2\text{ and }q\equiv 1\text{ mod }4,\\
\displaystyle\sum_{(M)}c(M)&\hspace{-3.5cm}\text{ if } q>2\text{ and } q\equiv 3\text{ mod }4,\\
\displaystyle\sum_{(M)}(r(M)-q(M))                &\hspace{-0.5cm}\text{ if } q=1,\\
m-\displaystyle\sum_{(M)}\left[\displaystyle\sum_{p||M|}q(M)-(k_p(M)-r_p(M))\right]&\hspace{-0.5cm}\text{ if } q=-1,\\
0                &\hspace{-0.5cm}\text{ } \text{otherwise}
\end{cases}
\eeq
\noindent where $m$ is the number of conjugacy classes of maximal finite subgroups in $G$, 
the sums range over the conjugacy classes of maximal finite
subgroups of  $G=\philbert$ and $H_q(BG;\calK(\dbZ))$ denotes the $q$-th homology of $BG$ with coefficients in the $K$-theory spectrum $\mathcal{K}(\dbZ)$.
\end{theorem}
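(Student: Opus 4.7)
The plan is to use Theorem \ref{philbert} to split the problem, and then compute the contributions from each maximal finite subgroup using classical rank formulas for the algebraic $K$-theory of integral group rings of finite groups.

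Since the Farrell--Jones conjecture is known for the arithmetic lattice $G=\philbert$, the orbit-category assembly identifies $K_q(\dbZ[G])$ with $H_q^{Or(G)}(E_{\vcyc}G;\dbK_{\dbZ})$. The inclusion $EG\hookrightarrow E_{\vcyc}G$ produces a long exact sequence
\begin{equation*}
\cdots\to H_q(BG;\calK(\dbZ))\to K_q(\dbZ[G])\to Wh_q(G)\to H_{q-1}(BG;\calK(\dbZ))\to\cdots
\end{equation*}
in which the first map is the classical assembly. Rational injectivity of the assembly (B\"okstedt--Hsiang--Madsen, or as a by-product of the rational Farrell--Jones conjecture) yields
\begin{equation*}
\text{rk}\,K_q(\dbZ[G])-\text{rk}\,H_q(BG;\calK(\dbZ))=\text{rk}\,Wh_q(G),
\end{equation*}
and by Theorem \ref{philbert} the right-hand side equals $\sum_{(M)}\text{rk}\,Wh_q(M)$. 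The same long exact sequence applied to a finite group $M$, together with the rational acyclicity of $BM$ in positive degrees, reduces matters to
\begin{equation*}
\text{rk}\,Wh_q(M)=\text{rk}\,K_q(\dbZ[M])-\text{rk}\,K_q(\dbZ).
\end{equation*}

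Next I would compute $\text{rk}\,K_q(\dbZ[M])$ for a finite group $M$. For $q\geq 2$, Quillen's localization sequence and the finiteness of $K_{>0}(\dbF_p[M])$ identify $K_q(\dbZ[M])\otimes\dbQ$ with $K_q(\dbQ[M])\otimes\dbQ$. The Wedderburn decomposition $\dbQ[M]\cong\prod_i D_i$, Morita invariance, and Borel's theorem on the centres $F_i=Z(D_i)$ then give
\begin{equation*}
\text{rk}\,K_q(\dbZ[M])=\sum_i\bigl(r_1(F_i)+r_2(F_i)\bigr)
\end{equation*}
when $q\equiv 1\pmod 4$ and $q\geq 5$, $\sum_i r_2(F_i)$ when $q\equiv 3\pmod 4$, and zero for positive even $q$. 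The archimedean places of each $F_i$ are in bijection with the simple factors of $\dbR[M]$ obtained by base-change: real places give factors of real or quaternionic type and complex places give complex-type factors. Hence $\sum_i(r_1(F_i)+r_2(F_i))=r(M)$ and $\sum_i r_2(F_i)=c(M)$. In the exceptional low degrees I would invoke Swan's theorem ($\text{rk}\,K_0(\dbZ[M])=q(M)$), Bass's theorem ($\text{rk}\,K_1(\dbZ[M])=r(M)-q(M)$), Carter's theorem ($\text{rk}\,K_{-1}(\dbZ[M])=1-q(M)+\sum_{p||M|}(k_p(M)-r_p(M))$), and the vanishing $K_q(\dbZ[M])=0$ for $q\leq -2$. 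Subtracting $\text{rk}\,K_q(\dbZ)$, which by Borel's theorem for $\dbZ$ equals $1$ for $q=0$ and for $q\equiv 1\pmod 4$ with $q\geq 5$ and vanishes otherwise in this range, and summing over $(M)$ yields the formula in each case of the theorem.

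The main obstacle is the representation-theoretic bookkeeping that translates the ranks predicted by Borel and Carter into the quantities $r(M)$, $c(M)$, $q(M)$, $k_p(M)$, and $r_p(M)$. One must compare the Wedderburn decompositions of $\dbQ[M]$, $\dbQ_p[M]$, $\dbR[M]$, and $\dbF_p[M]$, and track how the archimedean and $p$-adic places of the centres of the simple $\dbQ$-factors correspond to irreducible representations of $M$ over each ground field. Once these dictionaries are in place, the statement of the theorem follows by direct substitution and case analysis.
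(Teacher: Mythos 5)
Your route is genuinely different from the paper's and is essentially viable. The paper does not pass through Theorem \ref{philbert} at all: it reruns the $p$-chain spectral sequence for the single $Or(G,M\fin)$-space $\ast_{M\fin}$ (rather than the pair $(\ast_{M\fin},\ast_{Tr})$), identifies $E^1_{0q}=H_q(BG;\calK(\dbZ))\oplus\bigoplus_{(M)}K_q(\dbZ[M])$ and $E^1_{1q}=\bigoplus_{(M)}H_q(BM;\calK(\dbZ))$, and uses rational injectivity of the $d^1$-differential --- which only requires B\"okstedt--Hsiang--Madsen for the \emph{finite} groups $M$ --- to conclude $\text{rk}\,K_q(\dbZ[G])=\text{rk}\,E^1_{0q}-\text{rk}\,E^1_{1q}$. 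You instead quote Theorem \ref{philbert} as a black box, feed it into Waldhausen's long exact sequence for $G$ and for each $M$, and therefore need rational injectivity of the classical assembly map for $G$ itself; that is legitimate (BHM applies since an arithmetic lattice has finitely generated integral homology, and it also follows from the rational Farrell--Jones computations), but it is an extra input the paper's argument avoids. Your identification $\text{rk}\,H_q(BM;\calK(\dbZ))=\text{rk}\,K_q(\dbZ)$ via rational acyclicity of $BM$ agrees with the paper's Atiyah--Hirzebruch computation, and the representation-theoretic input (Borel, Bass, Carter, Jahren) is exactly what the paper cites rather than reproves, so sketching those proofs is optional.

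There is, however, one step that fails as written: you assert ``Swan's theorem: $\text{rk}\,K_0(\dbZ[M])=q(M)$''. Swan's theorem says that $\widetilde{K}_0(\dbZ[M])$ is \emph{finite} for $M$ finite, so $\text{rk}\,K_0(\dbZ[M])=1$; the quantity $q(M)$ is the rank of $K_0(\dbQ[M])$ (or of a maximal order), not of $K_0(\dbZ[M])$. With your formula the $q=0$ case would come out as $\sum_{(M)}(q(M)-1)$, which is nonzero already when some $M\cong\dbZ_5$, contradicting the claimed vanishing. Replacing $q(M)$ by $1$ in that line restores $\sum_{(M)}(1-1)=0$, and the rest of your case analysis then matches the theorem. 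A minor additional point: in the step $\text{rk}\,Wh_q(M)=\text{rk}\,K_q(\dbZ[M])-\text{rk}\,K_q(\dbZ)$ you should say explicitly that the assembly map for finite $M$ is rationally injective --- e.g.\ because the inclusion of the bottom cell realizes $\dbZ\to\dbZ[M]$, which is split by the augmentation --- so that the long exact sequence really does break into short exact sequences rationally.
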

The proof of Theorem \ref{rational}, together with Theorem \ref{hilbert}
lead us to expressions for the
the classical Whitehead group of the non-projective Hilbert modular group and
the lower reduced $K$-theory of $\hilbert$.
\begin{corollary}\label{WhSL}
\beq
Wh_1(\hilbert)\simeq Wh_1(\philbert)\oplus\philbert^{ab}\oplus\Z_2 ,
\eeq
where $\philbert^{ab}$ is the abelianization of $\philbert$.
Also
\beq
Wh_0(\hilbert)\simeq Wh_0(\philbert)\oplus\dbZ ;
\eeq
and
\beq
Wh_{-1}(\hilbert)\simeq\bigoplus_{(M)}K_{-1}(\dbZ[M]).
\eeq
where the sum are taken over the conjugacy classes of maximal finite subgroups of $\philbert$.
\end{corollary}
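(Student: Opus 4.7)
By Theorem \ref{hilbert} we have $Wh_q(\hilbert)\cong K_q(\dbZ[\philbert])$ for $q\leq 1$, so the corollary reduces to computing $K_q(\dbZ[\philbert])$ for $q\in\{-1,0,1\}$ and then matching the result with Theorem \ref{philbert}.

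The cases $q=-1$ and $q=0$ are formal. Since $Wh_{-1}(G)=K_{-1}(\dbZ G)$ by definition, Theorems \ref{hilbert} and \ref{philbert} together give
\[ Wh_{-1}(\hilbert)=K_{-1}(\dbZ[\philbert])=Wh_{-1}(\philbert)=\bigoplus_{(M)}K_{-1}(\dbZ[M]). \]
For $q=0$, the augmentation $\dbZ[\philbert]\to\dbZ$ splits the natural short exact sequence $0\to\widetilde{K}_0(\dbZ[\philbert])\to K_0(\dbZ[\philbert])\to K_0(\dbZ)\to 0$, and combining $\widetilde{K}_0=Wh_0$ with Theorem \ref{hilbert} yields $Wh_0(\hilbert)\cong Wh_0(\philbert)\oplus\dbZ$.

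The case $q=1$ is the crux. The plan is to extract from the proof of Theorem \ref{rational} an integral direct-sum decomposition
\[ K_1(\dbZ[\philbert])\cong H_1(B\philbert;\calK(\dbZ))\oplus\bigoplus_{(M)}Wh_1(M), \]
coming from the Farrell--Jones isomorphism $K_1(\dbZ[\philbert])\cong H^{\philbert}_1(\underline{E}\philbert;\calK(\dbZ))$ together with the splitting of this equivariant homology into its free piece $H_1(B\philbert;\calK(\dbZ))$ and the singular contributions $\widetilde{K}_1(\dbZ[M])=Wh_1(M)$ from the maximal finite subgroups. I would then identify $H_1(B\philbert;\calK(\dbZ))$ via the Atiyah--Hirzebruch spectral sequence $E^2_{p,q}=H_p(B\philbert;K_q(\dbZ))\Rightarrow H_{p+q}(B\philbert;\calK(\dbZ))$: on the line $p+q=1$ only the entries $E^2_{1,0}=\philbert^{ab}$ and $E^2_{0,1}=K_1(\dbZ)=\dbZ/2$ survive, and the resulting extension $0\to\dbZ/2\to H_1(B\philbert;\calK(\dbZ))\to\philbert^{ab}\to 0$ splits because the augmentation $\dbZ[\philbert]\to\dbZ$ produces a compatible retraction onto the $\dbZ/2$ summand. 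Assembling this with Theorem \ref{philbert} gives the first formula.

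The main obstacle is upgrading the essentially rational assembly argument underlying Theorem \ref{rational} to an \emph{integral} direct-sum splitting at $q=1$: one must verify that the contributions of the maximal finite subgroups actually split off as $\bigoplus_{(M)}Wh_1(M)$, and not merely contribute up to extensions. This is precisely the same structural input that makes Theorem \ref{philbert} hold integrally for every $q$, so the plan is to transport that decomposition (paired with the Atiyah--Hirzebruch computation above) down into degree one.
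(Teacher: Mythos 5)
Your route coincides with the paper's for the decisive case $q=1$: both reduce via Theorem \ref{hilbert} to computing $K_q(\dbZ[\philbert])$, and both aim at the integral decomposition $K_1(\dbZ[\philbert])\cong H_1(B\philbert;\calK(\dbZ))\oplus\bigoplus_{(M)}Wh_1(M)$ coming from Farrell--Jones plus the splitting into a free piece and the singular contributions of the maximal finite subgroups. The paper realizes this as the short exact sequence $0\to E^1_{11}\xrightarrow{d^1}E^1_{01}\to K_1(\dbZ[\philbert])\to 0$ of the $p$-chain spectral sequence, importing the $E^1$-terms and the identification $H_1(B\philbert;\calK(\dbZ))\cong\philbert^{ab}\oplus\dbZ_2$ from \cite{BJPP01}; your Atiyah--Hirzebruch computation is the same calculation (to complete it you should also note that the only differential threatening the line $p+q=1$, namely $d_2\colon H_2(B\philbert;\dbZ)\to H_0(B\philbert;K_1(\dbZ))$, vanishes, being dual to $\mathrm{Sq}^2$ acting on $H^0$). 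Your treatments of $q=0$ and $q=-1$ are more elementary than the paper's, which reruns the spectral sequence in those degrees: the augmentation splitting of $K_0$ and the identity $Wh_{-1}=K_{-1}$ (valid since $K_n(\dbZ)=0$ for $n<0$) give the same answers and are perfectly fine.

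The step you flag as the ``main obstacle'' is, however, exactly where the content lies, and ``transporting Theorem \ref{philbert}'' does not supply it: Theorem \ref{philbert} concerns the \emph{relative} spectral sequence for the pair $(\ast_{M\fin},\ast_{Tr})$, which collapses because no differentials occur at all, so it says nothing about how the free part and the singular parts interact in the absolute computation of $K_1$. Two integral facts are needed: (i) the $d^1$-differential is injective for $q\le 1$ (the paper cites the proof of Proposition 14 in \cite{BJPP01}); and (ii) for each maximal finite $M$ the classical assembly map $H_1(BM;\calK(\dbZ))\cong M\oplus\dbZ_2\to K_1(\dbZ[M])$ is \emph{split} injective, so that the cokernel of $d^1$ is genuinely $\philbert^{ab}\oplus\dbZ_2\oplus\bigoplus_{(M)}Wh_1(M)$ rather than an extension. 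Point (ii) is a concrete statement about finite groups and holds here because every maximal finite subgroup of $\philbert$ is cyclic: the assembly map lands in the subgroup of trivial units $\pm M\subset K_1(\dbZ[M])$, and $K_1(\dbZ[M])\cong(\pm M)\oplus Wh_1(M)$ with $Wh_1(M)$ free abelian (as $SK_1(\dbZ[M])=0$ for cyclic $M$), so $\pm M$ splits off. With (i) and (ii) your skeleton closes and, combined with Theorem \ref{philbert}, yields the stated formula.
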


The main tool we use to prove our main results is the $K$-theoretic 
\textit{Farrell-Jones Isomorphism conjecture for a group}. When this
conjecture is verified for a group $\Gamma$, then one can potentially compute
the algebraic $K$-theory of the group ring $\dbZ[\Gamma]$ by first 
determining the algebraic $K$-theory of its virtually cyclic subgroups and
the structure of the restricted orbit category
 $Or(\Gamma,\vcyc)$. This will be explained in more detail
in Section \ref{preliminaries}. Then a spectral sequence argument can give
rise to the calculation of some of the $K$-theory groups. This method 
has proved to be effective in several cases, for example $2$- and $3$-dimensional crystallographic groups \cite{pearson98}, \cite{AO06}, 
cocompact Fuchsian groups \cite{BJPP01}, \cite{BJPP02}, \cite{LS00}, 
Bianchi groups \cite{bianchi}, braid groups  \cite{JPM06}, \cite{JPM10}, hyperbolic reflexion groups  \cite{LO09}, virtually free groups \cite{JPLMP11}, \cite{JPS14}, etc.

The spectral sequence we use in our calculation is the $p$-chain spectral
sequence of Davis and L\"uck which we review in Section \ref{pchain}. This
spectral sequence turned out to be convenient due to the  structure of the finite subgroups of the Hilbert modular group. We go over this in Section \ref{HMG}.
Section \ref{proofs} is devoted to the proof of Theorems \ref{philbert}, 
\ref{hilbert} and \ref{rational}.

\section*{Acknowledgements}
We are grateful to Tom Farrell for suggesting this problem to us 
and D. Juan-Pineda for helpful comments on a preliminary version of this paper. The first author acknowledges the second author for teaching him how to
navigate through the somewhat dense literature that surrounds this topic. The second author wishes to thank the Binghamton University for its hospitality during the time where part of this paper was developed. The first author was supported by a Binghamton University Dissertation Fellowship. The second author was supported by PAPIIT-UNAM grant 105614, CONACYT Research grant 151338 and CONACYT doctoral scholarship.
\section{Preliminaries}\label{preliminaries}
\par The essential tool in our calculation is the Farrell-Jones Isomorphism
conjecture. To state this conjecture in a convenient way, we recall some
definitions from \cite{DL98}.

From now on we only consider discrete groups. 
A family of subgroups $\calF$ of a group $G$ is always assumed to be closed under conjugation and under taking subgroups. We are specially interested in the
following families of subgroups:
\begin{itemize}
	\item $\All$ of all subgroups of $G$;
	\item $\vcyc$ of all virtually cyclic subgroups of $G$, i.e. subgroups which have a possibly finite cyclic subgroup of finite index;
	\item $FbC$ of finite subgroups and all virtually cyclic subgroups of the form $F\rtimes \dbZ$ with $F$ finite;
	\item $\fin$ of all finite subgroups;
	\item $Tr$ consisting of the trivial subgroup.
\end{itemize}

Note that each family is contained in the previous one.

\begin{definition}
	Let $G$ be a group and $\calF$ be a family of subgroups. The restricted orbit category $\orf{G}$ is the category whose objects are homogenous spaces,
also called orbits, $G/H$, $H\in \calF$ and whose morphisms are $G$-maps.\\
	Let $Or(G)$ denote $Or(G,\All)$, and the set of $G$ maps between the orbits $G/H$ and $G/K$ is denoted by $mor_G(G/H, G/K)$.
\end{definition} 
Note that every element in $mor_G(G/H,G/K)$ is of the form $\func{R_a}{G/H}{G/K}$ $gH \mapsto ga^{-1}K$, provided $aHa^{-1} \subseteq K$. And that $R_a=R_b$ if and only if $ab^{-1} \in K$.\\

\begin{definition}\label{orspaces}
Let $G$ be a group and $\calF$ a family of subgroups of $G$. 
	\begin{itemize}
    \item An $\orf{G}$-space (resp. $\orf{G}$-spectrum) is a functor $\orf{G}\to SPACES$ (resp. $\orf{G}\to SPECTRA$), where 
SPACES (resp. SPECTRA) is the cateogry of compactly generated topological spaces (resp. spectra). See \cite[p. 207]{DL98}.	
	\item We denote by $\pt{\calF}$ the $Or(G)$-space defined as
\beq
 \pt{\calF}(G/H) =
  \begin{cases}
   \text{point} & \text{if }  H\in\calF  \\
   \emptyset       &\text{   }otherwise.
  \end{cases}
\eeq
\end{itemize}

\end{definition}
Given a $G-\calF$-space  $Z$, i.e. a $G$-space $Z$ all of whose isotropy
groups belong to $\calF$, we define the fixed point (contravariant) $Or(G,\calF)$-space $\func{map_G(\_,Z)}{Or(G,\calF)}{SPACES}$ by
$G/H\mapsto map_G(G/H,Z)=Z^H$. In particular, if $Z$ is a $G-\calF$-CW complex we say that $map_G(\_,Z)$ is a free $Or(G,\calF)$-CW complex.
\begin{remark}
An unreduced homology theory for contravariant $\orf{G}$-spaces with coefficientes in a covariant $\orf{G}$-spectrum can be defined. This homology theory, denoted by $H_*^{\orf{G}}(X,Y;\textbf{E})$,
is constructed in \cite{DL98} in order to establish the Farrell-Jones conjecture.
It satisfies the Weak Homotopy Equivalence (WHE) axiom, i.e., given a weak homotopy equivalence of pairs of contravariant $\orf{G}$-spaces $\func{(f,g)}{(X,A)}{(Y,B)}$ then the homology groups $H_p^{\orf{G}}(X,A;\textbf{E})$ and $H_p^{\orf{G}}(Y,B;\textbf{E})$ are isomorphic for all $p\in \dbZ$.
\end{remark}
It can be shown that $\assembly{G}{\calF}\cong \pi_n( \displaystyle hocolim_{\orf{G}}\ \dbK )$, where $\dbK$ denotes the $K$-theory spectrum defined in \cite[Section 2]{DL98}. 
Also 
$$\assembly{G}{\All}\cong \pi_n( \displaystyle hocolim_{Or(G)}\ \dbK )\cong K_n(\dbZ [G])$$
since $Or(G)$ has a final object $G/G$ and 
$\pi_n(\dbK (G/G))\cong K_n(\dbZ [G])$.
It is worth mentioning that in \cite[Section 2]{DL98} an $\orf{G}$-spectrum
$\dbK$ is defined for every
group $G$, in such a way that $\pi_i(\dbK(G/H))=K_n(\dbZ H)$ for all $n\in \dbZ$.

In their seminal paper (\cite{FJ93}) Farrell and Jones established their
famous isomorphism conjecture for the $K$-theory, 
$L$-theory and Pseudoisotopy functors. Here we consider the $K$-theoretic
version of the conjecture as stated by Davis and L\"uck in \cite{DL98}.

\begin{conjecture}[The Farrell-Jones isomorphism conjecture]For any group $G$ the following assembly map, induced by inclusion of $Or(G)$-spaces, is an isomorphism

\nbeq\label{FJ}\tag{$\ast$}
\func{A_{\vcyc,\All}}{H^{Or(G)}_n(*_{\vcyc};\dbK)}{H^{Or(G)}_n(*_{\All};\dbK)\cong K_n(\dbZ G)}.
\neeq
\end{conjecture}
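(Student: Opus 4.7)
The plan is to attack the conjecture via the \textbf{controlled $K$-theory} framework of Pedersen--Weibel combined with the transfer-reducibility machinery of Bartels--L\"uck--Reich, which together convert the assembly isomorphism into a geometric rigidity statement about group actions on suitable flow spaces. The strategy has three pieces: (i) reformulate the assembly map as the natural comparison between ordinary $K$-theory and a geometrically controlled version parametrized by a $G$-space; (ii) construct a transfer splitting this comparison map coherently with respect to the $\vcyc$-isotropy; and (iii) verify the geometric input that makes the transfer work, namely a $G$-equivariant cover of a flow space with finite covering dimension whose members are ``long and thin'' along flow lines.

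Concretely, I would first exploit the inheritance properties recorded in \cite{DL98}: the Farrell--Jones conjecture passes to subgroups, is preserved under directed colimits, and obeys a transitivity principle with respect to overgroups of finite index. This reduces the universal statement to verifying the conjecture on a class of ``building blocks'' closed under these operations. For each such $G$ I would construct a proper isometric action on a contractible $G$-CW complex $X$ endowed with a metric admitting a coarse geodesic flow, and then build the associated Bartels--L\"uck flow space $FS(X)$. Feeding $FS(X)$ into Pedersen--Weibel controlled algebra produces a controlled $K$-theory spectrum; the squeezing and vanishing theorems of controlled topology, applied to the flow-space cover, then yield a transfer
$$K_n(\dbZ[G])\longrightarrow H_n^{Or(G)}(\pt{\vcyc};\dbK)$$
that inverts $A_{\vcyc,\All}$, proving the conjecture.

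The principal obstacle, and the point at which the scheme genuinely strains, is the geometric input of step (iii) for an arbitrary discrete group $G$. For CAT$(0)$ or $\delta$-hyperbolic groups the geodesic flow together with the Bartels--L\"uck thickening supplies the required $\vcyc$-cover with bounded covering dimension, but for a general $G$ there is no canonical flow to separate points dynamically and the standard models of the classifying space for the family $\vcyc$ can have infinite covering dimension. Closing this gap would require either (a) a universal metric-embedding theorem asserting that every countable group admits a proper action on some space carrying enough hyperbolic or nonpositively curved structure to drive the flow argument, or (b) a replacement of the controlled-topology splitting by a purely spectrum-level descent: axiomatize the transfer as a natural splitting of the comparison of $Or(G)$-spectra and derive it from functorial properties of $\dbK$ alone. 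I would pursue option (b), but its viability depends on establishing a descent property for $\dbK$ of group rings along the family $\vcyc$ strong enough to obviate any geometric model --- and it is precisely this descent, or the existence of the universal flow in (a), that I expect to be the main obstacle and the deepest step in the plan.
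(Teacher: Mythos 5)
The statement you are addressing is the Farrell--Jones conjecture itself, quantified over \emph{every} group $G$. The paper does not prove it --- it is an open conjecture, and the paper only invokes the special case it actually needs, namely the result of \cite{KLR14} that the conjecture holds for lattices in virtually connected Lie groups, which covers $\philbert\subset PSL_2(\dbR)^n$; this is precisely property (FJ) of Lemma \ref{MNM}. So there is no proof in the paper to compare against, and no complete proof of the general statement exists in the literature.

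Your proposal correctly sketches the Bartels--L\"uck--Reich strategy underlying essentially all known verifications, and you have honestly located the fatal gap yourself: step (iii) demands geometric input (a transfer-reducible or finitely $\vcyc$-amenable action on a flow space admitting finite-dimensional ``long and thin'' covers) that is simply unavailable for an arbitrary discrete group. Neither proposed repair closes the gap. Option (a) is false as stated: there is no universal theorem placing every countable group into a hyperbolic or nonpositively curved setting rich enough to drive the flow argument --- groups with property (T), random groups containing expanders, and other exotic constructions are standard obstructions to any such universality. Option (b), a purely formal descent for the Davis--L\"uck spectrum $\dbK$ along the family $\vcyc$, does not follow from any functorial property of $\dbK$; if such a formal splitting existed, the conjecture would already be a theorem. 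Also note a small misattribution: the inheritance properties (passage to subgroups, colimits, transitivity) are not established in \cite{DL98}, which only formulates the conjecture in orbit-category language. For the purposes of this paper the correct move is not to prove the conjecture but to cite its verification for the relevant class of groups, which is exactly what Lemma \ref{MNM}(FJ) does.
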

Once the Farrell-Jones conjecture has been verified for a group $G$, one can
hope to compute $K_n(\dbZ[G])$ by computing the left hand side of \eqref{FJ}.
The later is a generalized homology theory that can be approached, for 
example, via Mayer-Vietoris sequences, Atiyah-Hirzebruch-type spectral
sequences or, as in our case, the $p$-chain spectral sequence.

The Whitehead groups $Wh_n(G)$ of $G$ appear in this context as follows
\begin{proposition}\label{waldhausen}
\cite[Prop. 15.7]{W78} Let $G$ be a group. Then 
$Wh_n(G)\cong H_n^{Or(G)}(*_{\All},*_{Tr};\dbK)$ for all $n\in\dbZ$.
In fact they fit in a long exact sequence

$$\cdots\to H_n(BG;\dbK(G/1)) \to K_n(\dbZ G)\to Wh_n(G) \to
H_{n-1}(BG;\dbK(G/1))\to \cdots$$
where $H_n(BG;\dbK(G/1))$ is the classical generalized homology theory with coefficients in the spectrum $\dbK(G/1)$ which has as homotopy groups the algebraic \ktheory of the group ring $\dbZ G$.
\end{proposition}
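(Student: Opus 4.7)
The plan is to identify each term in the stated sequence individually and then recover the sequence itself from the long exact sequence of the pair $(\pt{\All},\pt{Tr})$ in the Davis--L\"uck equivariant homology theory $H_*^{Or(G)}(-;\dbK)$.

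First, the absolute term $\assembly{G}{\All}\cong K_n(\dbZ G)$ has already been identified in the excerpt: since $\All$ contains every subgroup, the category $Or(G,\All)=Or(G)$ has $G/G$ as a terminal object, so $\text{hocolim}_{Or(G)}\dbK\simeq\dbK(G/G)$, whose homotopy groups are $K_n(\dbZ G)$ by construction of $\dbK$ in \cite[Section 2]{DL98}.

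Second, I would identify $\assembly{G}{Tr}$ with $H_n(BG;\dbK(G/1))$. Pick a free contractible $G$-CW complex $EG$. The contravariant $Or(G)$-space $map_G(\_,EG)$ assigns to $G/H$ the fixed-point set $EG^H$, which is contractible when $H$ is trivial and empty otherwise. Hence the canonical map $map_G(\_,EG)\to \pt{Tr}$ is a weak equivalence of contravariant $Or(G)$-spaces, and the WHE axiom yields $\assembly{G}{Tr}\cong H_n^{Or(G)}(map_G(\_,EG);\dbK)$. Unravelling the definition of the equivariant homology as the homotopy groups of the balanced smash product $map_G(\_,EG)_+\wedge_{Or(G)}\dbK$, this reduces to the classical generalized homology of $BG=EG/G$ with coefficients in the spectrum $\dbK(G/1)$ whose homotopy groups are $K_n(\dbZ G)$.

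With these two identifications in hand, defining $Wh_n(G):=\hompt{G}{\All}{Tr}$ and applying the long exact sequence of the pair in $H_*^{Or(G)}(-;\dbK)$ gives exactly
$$\cdots\to H_n(BG;\dbK(G/1))\to K_n(\dbZ G)\to Wh_n(G)\to H_{n-1}(BG;\dbK(G/1))\to\cdots,$$
where the left-hand map is induced by the inclusion $\pt{Tr}\hookrightarrow \pt{\All}$ and coincides with the classical assembly map.

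The main obstacle is to match this abstract definition of $Wh_n(G)$ with Waldhausen's intrinsic definition from \cite{W78}, which is phrased in terms of the algebraic $K$-theory $A(BG)$ of the space $BG$ and of a Whitehead summand in its assembly-type splitting. The required comparison, which is the content of \cite[Prop. 15.7]{W78}, proceeds by composing with the linearization map $A(BG)\to K(\dbZ G)$ and showing that the Whitehead summand corresponds exactly to the cofiber of the assembly map identified above. Once this comparison is available, the classical low-dimensional Whitehead groups $\widetilde{K}_0(\dbZ G)$, $K_1(\dbZ G)/\{\pm g\}$, etc.\ appear as special cases.
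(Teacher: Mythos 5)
The paper does not actually prove this proposition --- it is quoted verbatim as a citation to Waldhausen \cite[Prop.\ 15.7]{W78} --- so there is no in-paper argument to compare yours against. Your reconstruction is the standard one and its verifiable parts are correct: the identification $\assembly{G}{\All}\cong K_n(\dbZ G)$ via the terminal object $G/G$, the identification $\assembly{G}{Tr}\cong H_n(BG;\dbK(G/1))$ via the objectwise weak equivalence $map_G(\_,EG)\to \pt{Tr}$ and the WHE axiom, and the long exact sequence of the pair $(\pt{\All},\pt{Tr})$ with the connecting map to $K_n(\dbZ G)$ being the classical assembly map. You are also right to isolate the genuine mathematical content: everything above only shows that the \emph{relative} group $\hompt{G}{\All}{Tr}$ sits in the stated sequence, and the assertion that this relative group agrees with Waldhausen's intrinsically defined $Wh_n(G)$ is precisely what \cite[Prop.\ 15.7]{W78} supplies. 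Since you defer that comparison back to the cited result, your argument is not self-contained as a proof of the proposition, but it is an accurate account of how the statement decomposes, and it matches how the present paper actually uses it (namely, taking the relative group as the working definition of $Wh_n$).

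One small correction: the coefficient spectrum $\dbK(G/1)$ has homotopy groups $K_n(\dbZ[\{1\}])=K_n(\dbZ)$, not $K_n(\dbZ G)$ --- the domain of the classical assembly map is $H_n(BG;\calK(\dbZ))$, as the paper itself uses later in the proof of Theorem \ref{rational}. The phrase you copied from the statement (``homotopy groups \ldots of the group ring $\dbZ G$'') is a slip in the paper; if $\dbK(G/1)$ really had homotopy groups $K_*(\dbZ G)$, the assembly map could not be the one whose cofiber measures $Wh_n$.
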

\par When the Farrell-Jones conjecture holds for a group $G$ we obtain:
\begin{corollary}\label{fjwhitehead}
	Let $G$ be a group. Suppose that the Farrell-Jones conjecture holds for $G$, then $Wh_n(G)$ is isomorphic to $H_n^{Or(G)}(\pt{\vcyc},\pt{Tr};\dbK)$
for all $n\in\dbZ$.
\end{corollary}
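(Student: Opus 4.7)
The plan is to combine Proposition \ref{waldhausen} with the Farrell--Jones isomorphism by means of the long exact sequence of a triple in the homology theory $H_*^{Or(G)}(-;\dbK)$. First I would observe that there is a chain of inclusions of $Or(G)$-spaces
\beq
\pt{Tr}\subseteq \pt{\vcyc}\subseteq \pt{\All},
\eeq
so one has an associated long exact sequence of the triple
\beq
\cdots\to H_n^{Or(G)}(\pt{\vcyc},\pt{Tr};\dbK)\to H_n^{Or(G)}(\pt{\All},\pt{Tr};\dbK)\to H_n^{Or(G)}(\pt{\All},\pt{\vcyc};\dbK)\to\cdots
\eeq
provided that the generalized homology theory built in \cite{DL98} does satisfy the long exact sequence axiom for triples (which it does, since it is constructed as the homotopy groups of a mapping cone/telescope of $Or(G)$-spectra).

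Next I would analyze the third term using the long exact sequence of the pair $(\pt{\All},\pt{\vcyc})$, namely
\beq
\cdots\to H_n^{Or(G)}(\pt{\vcyc};\dbK)\xrightarrow{A_{\vcyc,\All}} H_n^{Or(G)}(\pt{\All};\dbK)\to H_n^{Or(G)}(\pt{\All},\pt{\vcyc};\dbK)\to\cdots
\eeq
The Farrell--Jones conjecture, which is assumed to hold for $G$, says precisely that the map $A_{\vcyc,\All}$ is an isomorphism for every $n\in\dbZ$. Consequently $H_n^{Or(G)}(\pt{\All},\pt{\vcyc};\dbK)=0$ for all $n$.

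Feeding this vanishing back into the long exact sequence of the triple forces the connecting map $H_n^{Or(G)}(\pt{\vcyc},\pt{Tr};\dbK)\to H_n^{Or(G)}(\pt{\All},\pt{Tr};\dbK)$ to be an isomorphism for all $n\in\dbZ$. Since by Proposition \ref{waldhausen} the right-hand side is $Wh_n(G)$, the result follows. I do not anticipate any serious obstacle here; the only delicate point is confirming that the homology theory $H_*^{Or(G)}(-;\dbK)$ has a long exact sequence for triples (not just for pairs), but this is standard from the construction in \cite{DL98} and from the WHE axiom recalled in the previous remark.
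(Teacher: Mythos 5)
Your argument is correct and is exactly the (implicit) argument the paper intends: the corollary is stated there without proof as an immediate consequence of Proposition \ref{waldhausen} and the Farrell--Jones isomorphism, and your long-exact-sequence-of-the-triple argument (equivalently, the five lemma applied to the pair sequences of $(\pt{\vcyc},\pt{Tr})$ and $(\pt{\All},\pt{Tr})$) is the standard way to make that precise. One cosmetic slip: the map $H_n^{Or(G)}(\pt{\vcyc},\pt{Tr};\dbK)\to H_n^{Or(G)}(\pt{\All},\pt{Tr};\dbK)$ is induced by the inclusion of $Or(G)$-spaces, not a connecting homomorphism.
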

\par One usually wonders
whether a smaller family of subgroups than $\vcyc$ would suffice to succeed
in the calculation of the Whitehead groups of a group or the $K$-theory of a group ring. The next two theorems tell us something about it.
\begin{theorem}\cite[A.10]{FJ93}\label{reducefin}
	Let $G$ be a group. Suppose that for any finite by cyclic subgroup $V$ of $G$, the assembly map
$$\func{A_{\fin,\All}}{H_n^{Or(V)}(\pt{\fin};\dbK)}{H_n^{Or(V)}(\pt{\All};\dbK)\cong K_n(\dbZ V)}$$
	is an isomorphism. Then $\assembly{G}{\fin}\cong \assembly{G}{FbC}$.
\end{theorem}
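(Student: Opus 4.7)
The plan is to invoke the standard transitivity principle for relative assembly maps: if $\mathcal{F} \subseteq \mathcal{G}$ are families of subgroups of $G$ and for each $H \in \mathcal{G}$ the assembly $A^H_{\mathcal{F}\cap H, \All}$ is an isomorphism, where $\mathcal{F}\cap H$ denotes the family of subgroups of $H$ that lie in $\mathcal{F}$, then the induced map
\beq
H^{Or(G)}_n(\pt{\mathcal{F}}; \dbK) \to H^{Or(G)}_n(\pt{\mathcal{G}}; \dbK)
\eeq
is an isomorphism for all $n$. Applying this with $\mathcal{F} = \fin$ and $\mathcal{G} = FbC$, and noting that $\fin \cap V$ coincides with the family of finite subgroups of $V \in FbC$, the hypothesis of the theorem is exactly the input needed for transitivity, and the desired isomorphism $\assembly{G}{\fin} \cong \assembly{G}{FbC}$ follows immediately.

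To justify the transitivity principle, I would pick a $G$-$FbC$-CW model $X$ for the classifying space $E_{FbC}(G)$. Passing to fixed points $G/H \mapsto X^H$ produces a free $Or(G,FbC)$-CW complex that is weakly equivalent, as an $Or(G)$-space extended by $\emptyset$ outside $FbC$, to $\pt{FbC}$, since $X^H$ is contractible exactly when $H \in FbC$ and empty otherwise. By the WHE axiom, replacing $\pt{FbC}$ by this model leaves $H^{Or(G)}_*(\pt{FbC}; \dbK)$ unchanged, and the same device is used to model $\pt{\fin}$ via $E_{\fin}(G)$.

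The argument then proceeds by a skeletal induction on $X$: for each cell of type $G/V \times D^n$ with $V \in FbC$, the Davis--L\"uck induction isomorphism \cite[Lemma 4.6]{DL98} reduces the comparison of $H^{Or(G)}_*(\pt{\fin}; \dbK)$ with $H^{Or(G)}_*(X;\dbK)$ restricted to the cell to the comparison of $H^{Or(V)}_*(\pt{\fin}; \dbK)$ with $K_*(\dbZ V) \cong H^{Or(V)}_*(\pt{\All}; \dbK)$. The latter is precisely the assembly $A^V_{\fin, \All}$, which is an isomorphism by hypothesis. Gluing these isomorphisms together via the five-lemma applied to the cofiber long exact sequences of the skeletal filtration of $X$ yields the conclusion.

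The main technical obstacle is verifying the naturality of the Davis--L\"uck induction isomorphism along attaching maps of cells so that the five-lemma may be applied coherently at every stage of the filtration. Once this compatibility is checked, the remainder of the argument is a routine exercise in long exact sequence chasing and follows the lines of \cite[Appendix]{FJ93}.
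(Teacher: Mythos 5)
Your proposal is correct: it is the standard transitivity principle for relative assembly maps, established by taking a $G$-CW model for the classifying space of the larger family $FbC$ and inducting over its cells, using the induction isomorphism to reduce each cell of type $G/V$ to the hypothesis $A_{\fin,\All}$ for $V$ and gluing with the five lemma. This is precisely the content and the proof of the cited result \cite[A.10]{FJ93}; the paper gives no argument beyond that citation, so your route coincides with the intended one.
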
	
\begin{remark}
The obstructions to $A_{\fin,\All}$ being an isomorphism is the so-called
Nil groups. The vanishing of these groups allows one to consider only
the family of finite subgroups. 
Thus under the hypothesis of Theorem \ref{reducefin} and using results
of \cite{DKR11}, \cite{DQR11}, one can actually compute the Whitehead groups
of a group $G$ by considering only its family of finite groups. In other words, if $G$ satisfies the Farrell-Jones conjecture then
	\begin{align*} 
	Wh_n(G) &\cong \hompt{G}{\All}{Tr} \\ 
			&\cong \hompt{G}{\vcyc}{Tr}\\
			&\cong \hompt{G}{FbC}{Tr}\\
			&\cong \hompt{G}{\fin}{Tr}.
	\end{align*}
\end{remark}
\par Here is one more way in which one can replace the family of subgroups
of a group by a smaller family.
\begin{theorem}\cite[Corollary 3.9]{DL03}\label{reducemax}
	Let $\calF$ be a family of subgroups of $G$. Denote by $M\calF \subset \calF$ the subfamily consisting of: all maximal elements in $\calF$, the groups in $\calF$ which are contained in more than one maximal element and the groups in $\calF$ which are contained in no maximal element of $\calF$. Then for all $n$
	$$\assembly{G}{\calF}\cong \assembly{G}{M\calF}.$$
\end{theorem}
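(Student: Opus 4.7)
The plan is to exploit the homotopy-colimit description of assembly maps recalled just before the theorem, $\assembly{G}{\calF} \cong \pi_n(\operatorname{hocolim}_{\orf{G}} \dbK)$, and to reduce the statement to a cofinality argument for the inclusion of orbit categories $\iota \colon Or(G, M\calF) \hookrightarrow \orf{G}$. By the standard homotopy-cofinality criterion (a spectral version of Quillen's Theorem A), $\iota$ induces a weak equivalence on homotopy colimits of $\dbK$, and hence an isomorphism on assembly groups, provided that for every $G/H \in \orf{G}$ the under-category $G/H \downarrow \iota$ has contractible nerve.

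For $H \in M\calF$, the identity morphism $G/H \to G/H$ is an initial object of $G/H \downarrow \iota$, and contractibility is immediate. The essential case is $H \in \calF \setminus M\calF$: here, by definition of $M\calF$, the group $H$ is contained in a unique maximal element $M_H$ of $\calF$. In this case my plan is to identify the canonical projection $p \colon G/H \to G/M_H$ as a terminal object of $G/H \downarrow \iota$. Given any object $(G/K, R_a)$, the condition $aHa^{-1} \subseteq K$ with $K \in M\calF$ forces $K$ to be the conjugate $aM_Ha^{-1}$ of $M_H$: any maximal element of $\calF$ containing $K$ also contains $aHa^{-1}$, and is therefore equal to $aM_Ha^{-1}$ by uniqueness; this rules out $K$ lying in more than one maximal, and, in the settings relevant to the paper where the ascending chain condition is available, also rules out $K$ lying in no maximal. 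A short calculation with the relation $R_b = R_{b'} \Longleftrightarrow b(b')^{-1} \in M_H$ recalled in the preliminaries then produces a unique $G$-map $R_b \colon G/K \to G/M_H$ making the triangle with $R_a$ commute, so that $(G/M_H, p)$ is terminal in $G/H \downarrow \iota$ and the under-category is contractible.

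The main obstacle, in my view, is precisely this structural identification of the objects of $G/H \downarrow \iota$ in terms of the unique maximal supergroup $M_H$: the cofinality criterion itself is formal, but its application hinges on extracting every consequence of the uniqueness of $M_H$ in order to rule out the various auxiliary cases allowed by the definition of $M\calF$. Once the terminality of $(G/M_H, p)$, and hence contractibility of the under-categories, is secured, homotopy cofinality of $\iota$ yields the weak equivalence $\operatorname{hocolim}_{Or(G, M\calF)} \dbK \simeq \operatorname{hocolim}_{\orf{G}} \dbK$, and passage to $\pi_n$ produces the asserted isomorphism $\assembly{G}{\calF} \cong \assembly{G}{M\calF}$.
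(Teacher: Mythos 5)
The paper gives no proof of this statement at all --- it is quoted verbatim from Davis--L\"uck as \cite[Corollary 3.9]{DL03} --- so you are supplying an argument where the paper supplies a citation. Your mechanism is the right one and, as far as I can tell, the standard one: use the identification $\assembly{G}{\calF}\cong\pi_n(\operatorname{hocolim}_{\orf{G}}\dbK)$ recalled in Section \ref{preliminaries}, and show the inclusion $Or(G,M\calF)\hookrightarrow Or(G,\calF)$ is homotopy cofinal by exhibiting an initial object of $G/H\downarrow\iota$ (the identity, when $H\in M\calF$) or a terminal object (the projection $G/H\to G/M_H$, when $H$ has a unique maximal overgroup $M_H$). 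Your case analysis of a target $(G/K,R_a)$ is also correct: $aHa^{-1}\subseteq K$ with $K$ maximal forces $K=aM_Ha^{-1}$ by uniqueness of the maximal element over $aHa^{-1}$, and $K$ lying under two distinct maximal elements contradicts that same uniqueness, so the second clause of $M\calF$ never occurs as a target; the computation with $R_b=R_{b'}\Leftrightarrow b(b')^{-1}\in M_H$ then gives existence and uniqueness of the comparison map.

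The one point where your proof falls short of the literal statement is the third clause of $M\calF$. If some $K\in\calF$ contains $aHa^{-1}$ but is contained in no maximal element of $\calF$, then $(G/K,R_a)$ is an object of $G/H\downarrow\iota$ admitting no morphism to $(G/M_H,p)$ (such a morphism would force $K\subseteq aM_Ha^{-1}$) and receiving none from it; after normalizing objects to the form $(G/K,R_e)$ the under-category is equivalent to the poset of elements of $M\calF$ containing $H$, which is disconnected in this situation, so plain cofinality cannot close that case and one cannot simply wave at it. You flag this and invoke an ascending chain condition, which is exactly the needed hypothesis: it makes the third clause empty. That hypothesis holds everywhere the theorem is applied in this paper, since $\calF$ is always a family of finite subgroups and property (M) of Lemma \ref{MNM} puts every finite subgroup inside a maximal one. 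So your argument fully covers the paper's uses; a proof of the unrestricted statement as quoted would have to handle the no-maximal-overgroup clause by some other device, as in the cited source.
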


\section{The \pchain}\label{pchain}
In this section we establish a special case of the $p$-chain spectral
sequence of Davis and L\"uck \cite{DL03}. Besides the Farrell-Jones
conjecture, this will be the main ingredient in our calculation. The
$p$-chain spectral sequence converges to $H^{\orf{G}}(X;\textbf{E})$. By
reasons that will become evident in the next section, it will be enough
to restrict ourselves to the restricted orbit category $\orfin{G}$ and $X$ the one point $Or(G,\fin)$-space. This
category has the property that every endomorphism is an isomorphism and
for each object $G/K$, $Aut(G/K)$ acts freely in $mor_G(G/H,G/K)$. 
Let $\overline{G/H}$ denote the isomorphism class of $G/H$. Note that
there is a partial order on the set of isomorphism classes defined by
$\overline{G/H}\leq \overline{G/K}$ if $mor_G(G/H,G/K)$ is nonempty. We
also write $\overline{G/H}<\overline{G/K}$ if 
$\overline{G/H} \leq \overline{G/K}$ and 
$\overline{G/K} \nleq \overline{G/H}$.
\begin{definition}
Let $G$ be a group and $\calF$ a family of subgroups.
A sequence of isomorphism classes of objects
$c:=\{\overline{G/H_0},\overline{G/H_1},\cdots,\overline{G/H_p}\}$
in $Or(G,\calF)$ is called a $p$-chain if
\beq
\overline{G/H_0}<\overline{G/H_1}<\cdots < \overline{G/H_p}.
\eeq
\end{definition}
Associated to a $p$-chain
$c=\{\overline{G/H_0},\overline{G/H_1},\cdots,\overline{G/H_p}\}$ there
are $Aut(G/H_p)$-$Aut(G/H_0)$-spaces $S(c)$ defined by

\beq
S(c) =
  \begin{cases}
   Aut(G/H_0) & \hspace{-1.5cm} \text{if } p=0 \\
   mor_G(G/H_{0},G/H_1) & \hspace{-1.5cm}\text{if } p=1\\
   mor_G(G/H_{p-1},G/H_p) \times_{Aut(G/H_{p-1})}  \cdots \times_{Aut(G/H_{1})} mor_G(G/H_{0},G/H_1) &\text{if } p\geq 2
  \end{cases}
\eeq
The next result is proven (in more generality)
in \cite[Lemma 2.11 and Remark 2.13]{DL98}
\begin{theorem}[The \pchain] 
	Let $G$ be a group and let $\mathcal{J}\subset\fin$ be any subfamily
of the family of finite subgroups of $G$. Then there is a spectral 
sequence whose first page is given by
\beq
E^1_{p,q}=\bigoplus_{\substack{p-\text{ chains } c\\ \text{ in } Or(G,\mathcal{J})}}
H_q^{Aut(G/H_0)} (pt\times_{Aut(G/H_p)} S(c); \dbK)
\eeq
which converges to $H_{p+q}^{Or(G)}(\ast_{\mathcal{J}}; \dbK)$.
\end{theorem}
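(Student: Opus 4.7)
The plan is to deduce this result from the more general $p$-chain spectral sequence of Davis and L\"uck [DL98, Lemma 2.11 and Remark 2.13] by specializing their construction to the case $\calF=\mathcal{J}\subset\fin$, $X=\pt{\mathcal{J}}$, and $\textbf{E}=\dbK$. Davis--L\"uck construct, in full generality, a spectral sequence converging to $H_*^{\orf{G}}(X;\textbf{E})$ for an arbitrary family $\calF$, an arbitrary contravariant $\orf{G}$-space $X$, and an arbitrary covariant $\orf{G}$-spectrum $\textbf{E}$. What remains is to identify the $E^1$-page in our special situation and to check that the abutment agrees with $H_*^{Or(G)}(\pt{\mathcal{J}};\dbK)$.

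First I would exploit the EI-structure of $Or(G,\mathcal{J})$. Since every $H\in\mathcal{J}$ is finite, any endomorphism $R_a\colon G/H\to G/H$ satisfies $aHa^{-1}\subseteq H$, hence $aHa^{-1}=H$ by cardinality, so $R_a$ is an isomorphism. Consequently the relation $\overline{G/H}\leq \overline{G/K}$ is a well-defined partial order on isomorphism classes of objects, and the non-degenerate simplices in the nerve of $Or(G,\mathcal{J})$ are precisely the $p$-chains of the excerpt. The skeletal filtration of the classifying space of $Or(G,\mathcal{J})$ by chain length induces a filtration $F_0\subset F_1\subset\cdots$ of the bar-construction spectrum that computes $H_*^{\orf{G}}(\pt{\mathcal{J}};\dbK)$.

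Next I would identify the filtration quotients. At level $p$ the quotient $F_p/F_{p-1}$ splits as a wedge, indexed by $p$-chains $c=\{\overline{G/H_0}<\cdots<\overline{G/H_p}\}$, of spectra of the form
\beq
\dbK(G/H_0)_+ \wedge_{Aut(G/H_0)} \bigl(\pt{\mathcal{J}}(G/H_p)_+ \wedge_{Aut(G/H_p)} S(c)_+\bigr),
\eeq
with $S(c)$ the iterated balanced product of morphism sets appearing in the statement. Taking homotopy groups, and using that $\pt{\mathcal{J}}(G/H_p)$ is a single point for every $H_p\in\mathcal{J}$, gives
\beq
E^1_{p,q}=\pi_{p+q}(F_p/F_{p-1})\cong \bigoplus_c H_q^{Aut(G/H_0)}\bigl(pt\times_{Aut(G/H_p)} S(c);\dbK\bigr),
\eeq
which is the formula claimed. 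Convergence to $H_{p+q}^{Or(G)}(\pt{\mathcal{J}};\dbK)$ is then automatic from the exhausting filtration, once one observes that $H_*^{Or(G)}(\pt{\mathcal{J}};\dbK)\cong H_*^{\orf{G}}(\pt{\mathcal{J}};\dbK)$ because $\pt{\mathcal{J}}$ is supported on $\mathcal{J}$ and the homology theory satisfies the WHE axiom recalled in the remark following Definition \ref{orspaces}.

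The main obstacle is the bookkeeping required to identify the filtration quotients with the equivariant homology groups of the balanced products $S(c)$: one has to keep track of which $Aut(G/H_i)$ acts on which morphism-set factor and on which side, and verify compatibility with the differentials coming from face maps in the bar construction. All of this is handled systematically by Davis and L\"uck; the substantive step here is the EI-observation together with the reduction to $X=\pt{\mathcal{J}}$, so the proof amounts to invoking their general result in the present setting.
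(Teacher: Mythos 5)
Your proposal takes essentially the same route as the paper, which offers no independent proof and simply cites the general $p$-chain spectral sequence of Davis and L\"uck; your specialization to $\calF=\mathcal{J}$, $X=\pt{\mathcal{J}}$, $\textbf{E}=\dbK$, together with the EI-observation that finiteness of the subgroups forces endomorphisms of $G/H$ to be isomorphisms, is exactly the intended reading of that citation. The extra detail you supply on the filtration quotients is correct and consistent with the Davis--L\"uck construction, so there is nothing to add.
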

\begin{remark}
This spectral sequence is very manageable when one can control the length
of the $p$-chains. In our case, when $G$ becomes the Hilbert modular group,
we will see that we can work in a family of subgroups in which 
no $p$-chains for $p\geq 2$ have to be considered in the restricted orbit category.
This will make the spectral sequence collapse quickly and explicit
calculations can be done.
\end{remark}

\section{The Hilbert Modular Group}\label{HMG}
In this section we review the definition and some basic properties of
the Hilbert modular group.
The results we state without proof in this section can be found
for example in \cite{freitag}. For additional information about the Hilbert modular group we refer the reader to \cite{hirzebruch}, \cite{geer}.

A totally real number field $k$ is an algebraic extension of $\dbQ$ such that all its embedings $\sigma_i:k\to \dbC$ have image contained in $\dbR$. Let $k$ denote a totally real number field of degree $n$ and
$\calO_k$ its ring of integers. The \textit{Hilbert modular group} $PSL_2(\calO_k)$ is defined to be the quotient of the special linear
group of 2 by 2 matrices $SL_2(\calO_k)$ with entries in $\calO_k$ by the subgroup consisting of $\{I,-I\}$, where $I$ denotes the identity matrix; 
in other words 
\beq
PSL_2(\calO_k)=SL_2(\calO_k)/\{I,-I\}.
\eeq

Note that if $k=\dbQ$, then $PSL_2(\calO_k)=PSL_2(\dbZ)$ is nothing but the classical
modular group. However $PSL_2(\calO_k)$ \textit{is not} a discrete subgroup
of $PSL_2(\dbR)$ if $n\geq 2$. Yet it does act properly and discontinuously on the
$n$-fold product $\dbH^n=\dbH\times\cdots\times\dbH$ of upper half planes, by fractional linear transformations in each of the $n$ factors, via the $n$ different
embeddings of $k$ into $\dbR$. Thus the Hilbert modular group is a discrete subgroup
of $PSL_2(\dbR)^n=PSL_2(\dbR)\times\cdots\times PSL_2(\dbR)$.
For example, if we let $d$ be a square-free positive integer and $k=\dbQ(\sqrt{d})$, 
then there are two embeddings of $k$ into $\dbR$, namely
\beq
\sigma_1: s+t\sqrt{d}\mapsto s+t\sqrt{d}\ \ \ \ \ \ \text{and}\ \ \ \ \ \ \sigma_2:s+t\sqrt{d}\mapsto s-t\sqrt{d},
\eeq
for $s,t\in\dbQ$. In this case, a matrix  
$\begin{pmatrix}
\alpha & \beta\\
\gamma & \delta
\end{pmatrix}\in PSL_2(O_k)$ acts on $(z_1,z_2)\in\dbH\times\dbH$ by 
\beq
\begin{pmatrix}
\alpha & \beta\\
\gamma & \delta
\end{pmatrix}
(z_1,z_2)=\left(\frac{\sigma_1(\alpha)z_1+\sigma_1(\beta)}{\sigma_1(\gamma)z_1+\sigma_1(\delta)},
\frac{\sigma_2(\alpha)z_2+\sigma_2(\beta)}{\sigma_2(\gamma)z_2+\sigma_2(\delta)}\right).
\eeq
\par The action of $PSL_2(\calO_k)$ on $\dbH^n$ is not free. One can detect
points with non-trivial isotropy by the following lemma (Compare with the analogous situation of 
$PSL_2(\dbZ)$ acting on $\dbH$ by M\"obius transformations):
\begin{lemma}
Let $h\in PSL_2(\calO_k)$ and dentote by 
$\sigma_i:PSL_2(\calO_k)\hookrightarrow PSL_2(\dbR)$, $i=1,\ldots, n$ the 
canonical embeddings of $PSL_2(\calO_k)$ into $PSL_2(\dbR)$. Then the following
conditions are equivalent:
\begin{enumerate}
\item $\sigma_1(h),\ldots,\sigma_n(h)\in PSL_2(\dbR)$ are elliptic matrices,
i.e. their traces satisfy 
\beq
[Tr\: \sigma_i(h)]^2-4<0.
\eeq
\item $h$ has finite order.
\item $h$ has a unique fixed point.
\end{enumerate}
\par Moreover the stabilizer $\Gamma_z$ of any point $z\in\dbH^n$ in 
$PSL_2(\calO_k)$ is a finite cyclic group.
\end{lemma}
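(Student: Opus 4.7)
The plan is to reduce everything to the standard trichotomy for elements of $PSL_2(\dbR)$, applied coordinate-wise through each of the $n$ real embeddings, and then to use the fact that $PSL_2(\calO_k)$ acts properly discontinuously on $\dbH^n$. Recall that a non-identity element $g \in PSL_2(\dbR)$ is elliptic, parabolic, or hyperbolic according to whether $[\mathrm{Tr}\,g]^2-4$ is negative, zero, or positive; elliptic elements have a unique fixed point in $\dbH$ (and are conjugate in $PSL_2(\dbR)$ to an element of the circle group $SO(2)/\{\pm I\}$), parabolic elements fix a unique point on $\partial\dbH$, and hyperbolic elements fix exactly two points on $\partial\dbH$. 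This classification will drive the whole argument.

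For the equivalences, first I would show $(1)\Rightarrow(3)$: if every $\sigma_i(h)$ is elliptic then each $\sigma_i(h)$ has a unique fixed point $z_i \in \dbH$, and the point $(z_1,\dots,z_n) \in \dbH^n$ is then the unique fixed point of $h$ under the diagonal action. For $(3)\Rightarrow(1)$: if $h$ fixes some $(z_1,\dots,z_n) \in \dbH^n$ then every $\sigma_i(h)$ fixes $z_i \in \dbH$, so by the trichotomy each $\sigma_i(h)$ is either the identity or elliptic; but since $\sigma_1,\dots,\sigma_n$ are injective and jointly injective on $PSL_2(\calO_k)$, and since $h$ is assumed to have \emph{a} fixed point (implicitly nontrivial from the context of the equivalence), the elliptic case prevails. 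For $(1)\Rightarrow(2)$: an elliptic element of $PSL_2(\dbR)$ lies in a conjugate of $SO(2)/\{\pm I\}$, so every $\sigma_i(h)$ has compact closure in $PSL_2(\dbR)$; together with $PSL_2(\calO_k)$ being a discrete (in fact lattice) subgroup of $PSL_2(\dbR)^n$, this forces $h$ to generate a finite cyclic subgroup. Finally, $(2)\Rightarrow(3)$ follows because the finite cyclic group generated by $h$ acts on $\dbH^n$ via the compact group $(SO(2)/\{\pm I\})^n$ after conjugation in each factor, and Cartan's fixed point theorem (or averaging) produces a fixed point, which is unique by $(3)\Rightarrow(1)$ plus the elliptic uniqueness statement.

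For the moreover part, the stabilizer $\Gamma_z$ of any $z=(z_1,\dots,z_n)$ is finite by proper discontinuity of the $PSL_2(\calO_k)$-action on $\dbH^n$. To see it is cyclic, compose with $\sigma_1$: since $\sigma_1:PSL_2(\calO_k)\to PSL_2(\dbR)$ is injective (field embeddings are injective), $\Gamma_z$ embeds into the stabilizer of $z_1 \in \dbH$ in $PSL_2(\dbR)$, which is conjugate to $SO(2)/\{\pm I\}$, itself a circle group. Every finite subgroup of the circle is cyclic, hence so is $\Gamma_z$.

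The main obstacle, although modest, is the step $(2)\Rightarrow(3)$ (the existence of a fixed point for finite-order elements), where one must argue that a finite subgroup of $PSL_2(\dbR)^n$ acting on $\dbH^n$ has a fixed point; this is where the structure of $\dbH^n$ as a product of complete simply connected nonpositively curved spaces enters, via the Cartan fixed point theorem. Everything else reduces to standard facts about $PSL_2(\dbR)$ and to the injectivity of the real embeddings of $k$.
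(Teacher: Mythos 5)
Your proposal is correct. Note that the paper itself offers no proof of this lemma --- it is one of the facts deferred to Freitag's book --- so there is nothing to compare against except the standard argument, which is essentially what you give: the elliptic/parabolic/hyperbolic trichotomy applied through each embedding, injectivity of each $\sigma_i$ (a field embedding of $k$ into $\dbR$ induces an injection $PSL_2(\calO_k)\hookrightarrow PSL_2(\dbR)$), discreteness of the diagonal image in $PSL_2(\dbR)^n$, and proper discontinuity for the finiteness of stabilizers. Two small remarks. First, as you observe, the equivalence only makes sense for $h\neq 1$ (the identity has finite order but fixes everything and is not elliptic); it is worth saying this explicitly rather than leaving it "implicit from context." Second, your route through the Cartan fixed point theorem for $(2)\Rightarrow(3)$ works, but it is heavier than necessary: a nontrivial element of finite order in $PSL_2(\dbR)$ cannot be parabolic or hyperbolic (those are conjugate to unipotent, resp.\ real diagonal with eigenvalue $\neq\pm1$, hence of infinite order), so each $\sigma_i(h)$ is already elliptic, giving $(2)\Rightarrow(1)$ directly, and then $(1)\Rightarrow(3)$ produces the unique fixed point coordinate-wise with no appeal to nonpositive curvature. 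The "moreover" part as you argue it --- $\Gamma_z$ is finite by proper discontinuity and embeds via $\sigma_1$ into the point stabilizer in $PSL_2(\dbR)$, a circle group, whose finite subgroups are cyclic --- is exactly right.
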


For example the matrix $\begin{pmatrix}
\sqrt{2} & -1\\
1 &0
\end{pmatrix}$
is an elliptic element of $PSL_2\left(\calO_{\dbQ(\sqrt{2})}\right)$ of order
$4$ that fixes the point $\frac{\sqrt{2}}{2}((1+i),(-1+i))\in\dbH\times\dbH$.
\begin{remark}
$\dbH^n$ turns out to be a classifying space for proper actions of 
the Hilbert modular group $PSL_2(\calO_k)$, where the fixed point sets by
finite subgroups are not only contractible but consist of a single point. This
property will simplify the calculation of the $K$-theory groups enormously.
\end{remark}
Perhaps more importantly for our purposes, $PSL_2(\calO_k)$ satisfies the
conditions M, NM and FJ specified in \cite[Section 4]{DL03}:
\begin{lemma}\label{MNM}
The Hilbert modular group $PSL_2(\calO_k)$ satisfies the following three
properties:
\begin{itemize}
\item[(M)] Every finite subgroup of $PSL_2(\calO_k)$ is contained in a unique maximal
finite subgroup.
\item[(NM)] If $M$ is a maximal finite subgroup of $PSL_2(\calO_k)$ then
$N(M)=M$, where $N(M)$ denotes the normalizer of $M$ in $PSL_2(\calO_k)$.
\item[(FJ)] The Isomorphism Conjecture of Farrell and Jones for algebraic $K$-theory
is true for $PSL_2(\calO_k)$.
\end{itemize}
\end{lemma}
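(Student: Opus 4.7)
The three properties split naturally into two easy topological statements, (M) and (NM), and one deeper input, (FJ). The plan is to deduce (M) and (NM) from the preceding remark that every finite subgroup of $\Gamma := \philbert$ fixes a single point of $\dbH^n$, and to obtain (FJ) by invoking known results.

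For (M): given a finite subgroup $F\le\Gamma$, let $z_F\in\dbH^n$ be its unique fixed point and consider the stabilizer $\Gamma_{z_F}$, which is finite cyclic by the previous lemma and manifestly contains $F$. Any finite subgroup $\widetilde{F}\supseteq\Gamma_{z_F}$ also has a unique fixed point; since $\Gamma_{z_F}$ already fixes $z_F$, this point must be $z_F$, and so $\widetilde{F}\subseteq\Gamma_{z_F}$. Thus $\Gamma_{z_F}$ is a maximal finite subgroup containing $F$. For uniqueness, if $F\subseteq M$ with $M$ maximal finite, then the unique fixed point of $M$ is also fixed by $F\subseteq M$ and therefore equals $z_F$; hence $M\subseteq\Gamma_{z_F}$, and maximality forces $M=\Gamma_{z_F}$.

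For (NM): take $M=\Gamma_{z_0}$ maximal finite and $g\in N_\Gamma(M)$. Then $gMg^{-1}=M$ fixes $g\cdot z_0$, and since $M$ has the unique fixed point $z_0$, we get $g\cdot z_0=z_0$, i.e. $g\in\Gamma_{z_0}=M$; the reverse inclusion $M\subseteq N_\Gamma(M)$ is trivial.

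The principal obstacle is (FJ), since verifying the Farrell-Jones conjecture from scratch is notoriously hard. Here I would appeal to the literature rather than attempt a direct proof: $\Gamma$ is an arithmetic lattice in the semisimple real Lie group $PSL_2(\dbR)^n$ (equivalently, $\hilbert$ arises from $SL_2$ over the totally real field $k$ by restriction of scalars), and the $K$-theoretic Farrell-Jones conjecture with coefficients has been established for all such arithmetic groups by Bartels--L\"uck--Reich--R\"uping. Since $\philbert=\hilbert/\{\pm I\}$ is again arithmetic (and FJC with coefficients is preserved under passing to subgroups and under quotients by finite normal subgroups in the relevant class), property (FJ) follows by citation. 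The only real choice to be made here is which reference is most convenient for the applications in Sections \ref{pchain}--\ref{proofs}; the topological arguments for (M) and (NM) are essentially forced by the remark preceding the lemma.
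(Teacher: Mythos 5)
Your proposal is correct and follows essentially the same route as the paper: (M) and (NM) are deduced exactly as in the paper from the uniqueness of fixed points of nontrivial finite subgroups acting on $\dbH^n$, and (FJ) is obtained by citation (the paper invokes the Farrell--Jones conjecture for arbitrary lattices in virtually connected Lie groups due to Kammeyer--L\"uck--R\"uping, rather than the arithmetic-groups results you name, but the effect is the same). Your uniqueness argument in (M) is in fact spelled out slightly more fully than the paper's.
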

\begin{proof}
Note that by the lemma above if $H$ is a finite subgroup of 
$PSL_2(\calO_k)$ then $H$ is contained in the finite
stabilizer $\Gamma_z$ of a point $z\in\dbH^n$. $\Gamma_z$ is a finite maximal subgroup
of $PSL_2(\calO_k)$. For if $F$ is a finite subgroup of
 $PSL_2(\calO_k)$ containing $\Gamma_z$, then there must exist $y\in\dbH^n$
such that $f\cdot y=y$ for all $f\in F$. In particular, every element of $\Gamma_z$
would fix $y$ and by uniqueness of the fixed points, $y=z$, that is to say $f\in\Gamma_z$. 
This proves that $PSL_2(\calO_k)$ satisfies property (M).
\par Let $M$ be a maximal finite subgroup of $PSL_2(\calO_k)$. Then by the
previous paragraph, $M=\Gamma_z$,
where $\Gamma_z$ is the stabilizer of some point $z\in\dbH^n$. Let now $g\in N(M)$,
then $gfg^{-1}\in M$ for some all $f\in M=\Gamma_z$. Hence $gfg^{-1}z=z$ which
implies that $fg^{-1}z=g^{-1}z$, and by the uniqueness of the fixed points
$g^{-1}z=z$, i.e. $g\in M$. This proves that $PSL_2(\calO_k)$ satisfies
the property (NM).
\par As for property (FJ) we have to note that $\philbert$ is a lattice
in the Lie group $PSL_2(\dbR)^n$. The Farrell-Jones conjecture has been
proven for this type of groups in \cite{KLR14}.
\end{proof}
\begin{remark}
Conditions (M) and (NM) can be interpreted in a somewhat geometric way
as follows: Let $\mathcal{J}$ be the family of groups $G$ satisfying (FJ) and
for which there is a model for $\underline{E}G$ with the property that 
every fixed point set by a finite subgroup of $G$ consists of a single point.
Then it is clear that if $G\in\mathcal{J}$, then $G$ satisfies (M) and (NM)
and (FJ). It is worth noticing that such family $\mathcal{J}$ is closed
under free products.
\end{remark}
\par If a group $G$ has properties (M) and (NM) its only possible
virtually cyclic subgroups are very limited (Compare \cite[p.100]{DL03}). 
\begin{lemma}\label{vcyc}
Let $G$ be a group having properties (M) and (NM), then every 
infinite virtually
cyclic subgroup of $G$ is isomorphic to either $\dbZ$ or $\dbZ_2\ast\dbZ_2$.
\end{lemma}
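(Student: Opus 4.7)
The plan is to invoke the classification of infinite virtually cyclic groups and then check that (M) and (NM) rule out all but the two claimed models. Recall that every infinite virtually cyclic group $V$ is isomorphic either to an extension of \emph{type~I}, $V\cong F\rtimes\dbZ$ with $F$ finite (automatically split because $\dbZ$ is free), or to an amalgamated product of \emph{type~II}, $V\cong A\ast_C B$ with $A$, $B$, $C$ finite and $[A:C]=[B:C]=2$. My task is then to show that (M) and (NM) force $F=1$ in type~I and $C=1$ in type~II, which yield $\dbZ$ and $\dbZ_2\ast\dbZ_2$ respectively.

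For type~I, write $V=F\rtimes\langle t\rangle\leq G$ with $t$ of infinite order, and suppose for contradiction that $F\neq 1$. By (M), $F$ sits inside a unique maximal finite subgroup $M$ of $G$. Normality of $F$ in $V$ gives $tFt^{-1}=F$, so $tMt^{-1}$ is also a maximal finite subgroup containing $F$; uniqueness forces $tMt^{-1}=M$, i.e.\ $t\in N_G(M)$. Then (NM) yields $t\in M$, contradicting that $t$ has infinite order. Hence $F=1$ and $V\cong\dbZ$.

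For type~II, write $V=A\ast_C B$ and assume $C\neq 1$. Apply (M) to $C$ to obtain a unique maximal finite subgroup $M\leq G$ with $C\subseteq M$. Since $A$ and $B$ are finite subgroups of $G$ each containing $C$, the uniqueness clause of (M) forces the maximal finite subgroups that contain $A$ and $B$ to both equal $M$; in particular $A,B\subseteq M$. But then $V=\langle A,B\rangle\subseteq M$ is finite, contradicting that $V$ is infinite. Therefore $C=1$, and from $[A:C]=[B:C]=2$ one reads off $|A|=|B|=2$, so $V\cong\dbZ_2\ast\dbZ_2$.

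The argument is very short once the type~I/type~II dichotomy is accepted; the only subtle step is the application of (NM) in type~I, which is what lets us upgrade the normalizer-level information ``conjugation by $t$ fixes the maximal finite subgroup $M$'' to the element-level conclusion ``$t$ itself lies in $M$''. Apart from this, the proof is a direct consequence of the uniqueness built into (M), and no further input is required.
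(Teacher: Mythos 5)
Your proof is correct and takes essentially the same approach as the paper's: both rest on the standard classification of infinite virtually cyclic groups and use the uniqueness clause of (M), together with (NM), to force the finite part to be trivial. The only cosmetic difference is that the paper handles both types in one stroke by taking the finite normal kernel $F$ of the surjection $V\to\Gamma$ (with $\Gamma=\dbZ$ or $\dbZ_2\ast\dbZ_2$) and showing $V\subseteq N_G(F)\subseteq M$, whereas you treat the two types separately (your type~II case in fact needs only (M)).
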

\begin{proof}
Every infinite virtually cyclic subgroup $V$ of $G$ fits into an extension
\beq
1\to F\to V\to \Gamma\to 1
\eeq
where $\Gamma$ is either $\dbZ$ or $\dbZ_2\ast\dbZ_2$ and $F$ is finite. Since $F$ is normal in $V$ then $V\subset N(F)$ where $N(F)$ is the normalizer of $F$ in $G$. Suppose that $F$ is non-trivial and let $M$ be
the unique maximal finite subgroup of $G$ containing $F$.
It is clear that $F\subset gMg^{-1}$ if $g\in N(F)$. Hence, by uniqueness,
$g\in M$. This shows that $N(F)\subset M$ which is finite, contradicting that $V\subset N(F)$. Therefore $F$ is trivial and the conclusion follows.
\end{proof}

\section{Whitehead groups and rational $K$-theory of the Hilbert modular group}\label{proofs}
In this section we prove Theorems \ref{philbert}, \ref{hilbert} and
\ref{rational}. 
\begin{proof}[\textbf{Proof of Theorem \ref{philbert}}]
Let $G=\philbert$. Recall that 
$Wh_q(G)\simeq H_q^{Or(G)}(\ast_{\All},\ast_{Tr};\dbK)$.
The first step in our calculation is to verify the
Farrell-Jones conjecture for $G$. As mentioned in Lemma \ref{MNM}, this 
follows from \cite{KLR14}. The next step is trying to reduce the family
of subgroups as much as possible. In fact, by Lemma \ref{vcyc} and the
fact that the Nil groups of the integer group rings of 
$\dbZ$ and $\dbZ_2\ast \dbZ_2$ vanish \cite[Lemma 2.5]{LS00}, we have that the assembly
map 
$$\func{A_{\fin,\All}}{H_q^{Or(V)}(\pt{\fin};\dbK)}{H_q^{Or(V)}(\pt{\All};\dbK)}$$
is an isomorphism. Hence $Wh_q(G)$ can be computed using only the family
of finite subgroups of $G$. That is to say
\beq
Wh_q(G)\simeq H_q^{Or(G)}(\ast_{\fin},\ast_{Tr};\dbK)
\eeq
Furthermore, by combining Lemma \ref{MNM} (property M) and 
Theorem \ref{reducemax}, we see that the family of subgroups considered can
be reduced further to the subfamily $M\fin$, where $M\fin$ denotes the family of maximal finite subgroups of $G$ union the trivial subgroup, as defined in Theorem \ref{reducemax}. Hence
\beq
Wh_q(G)\cong H_q^{Or(G)}(\ast_{M\fin},\ast_{Tr};\mathbb{K}).
\eeq
\par Now that we have replaced $\vcyc$ by  $M\fin$, we analyze
the structure of the $p$-chains in $Or(G,M\fin)$.
\begin{description}
\item[$0$-chains.] These are chains of the form
$\{\overline{G/H}\}$, where $H\in M\fin$. 
\item[$1$-chains.] Here we only have chains of the form $\{\overline{G/1},\overline{G/H}\}$. Note
that there are no $1$-chains of the form $\{\overline{G/H},\overline{G/K}\}$, for $H,K\in M\fin$, $H\neq 1$, 
because every morphism $G/H\to G/K$ has to be an isomorphism.
\item[$p$-chains, $p\geq 2$.] There are none because every morphism
$G/H\to G/K$, $H\neq 1$,  is an isomorphism.
\end{description}
Also notice that by considering the pair
$(\ast_{M\fin},\ast_{Tr})$ we are neglecting terms coming
from the $0$-chain $\{G/1\}$ in the $E^1$ page of the $p$-chain spectral
sequence for pairs. Particularly, no $1$-chains have to be considered in the
calculation of the Whitehead groups.
Thus the spectral sequence will only have contributions
from isomorphism classes of orbits of the form $G/H$ with $H\in M\fin$, $H\neq 1$. Now recall that $Aut(G/H)=N(H)/H=\{e\}$, where $N(H)$ is the normalizer of $H$ in $G$ and
the last equality follows from Lemma \ref{MNM} (property NM). Also
there is a correspondence between $0$-chains in $Or(G,M\fin)$ and conjugacy
classes of maximal finite subgroups in $G$.
Thus we end up
with the following $E^1$-term.
\begin{figure}[!h]
\vspace*{0cm}
    \begin{center}
    \includegraphics[scale=1]{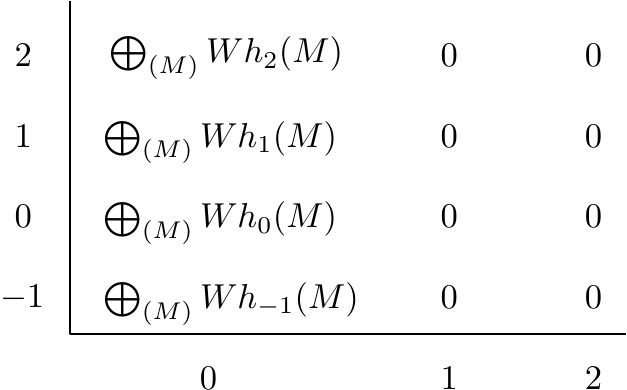}
    \end{center}
 \end{figure}
Therefore the spectral sequence collapses at $E^1$ and the result follows. 
\end{proof}
\begin{proof}[\textbf{Proof of Theorem \ref{hilbert}}]
We follow the same strategy as in the proof of Theorem \ref{philbert}.
Let $G=\hilbert$. It satisfies the Farrell-Jones conjecture for the same
reasons that $\philbert$ does. Thus we can guarantee that
\beq
H_q^{Or(G)}(\ast_{Vcyc};\mathbb{K})\cong K_q(\mathbb{Z}[G])
\eeq
for all $q \in \mathbb{Z}$. The idea now is trying to reduce the family
of subgroups in order to compute the left side of this equation.
Recall that by  \cite{DKR11}, \cite{DQR11} it is always posible to 
reduce the family from $\vcyc$ to $FbC$ (see also Section \ref{preliminaries}).
Hence if we want to replace the family $\vcyc$ with $\fin$ it is enough
to prove that for any subgroup of $G$ of the form $F\rtimes\mathbb{Z}$ with $F$ finite, the assembly map $f:H_q^{Or(F\rtimes\mathbb{Z})}(\ast_{\fin};\mathbb{K})\to K_q(\mathbb{Z}[F\rtimes\mathbb{Z}])$ is an isomorphism.
To see this, note that $F\times\mathbb{Z}<F\rtimes\mathbb{Z}$ and 
the only non-trivial finite-order element that commutes with 
	an element of infinite order is 
	$-I=\left( \begin{array}{cc}
	-1 & 0 \\ 
	0 & -1
	\end{array}\right) $,
then every group in the family $FbC$ is isomorphic to either $\dbZ$ or
$\dbZ_2\times\dbZ$. By \cite[Lemma 2.5]{LS00} $f$ is an isomorphism
for $q\leq 1$ when $F$ is the trivial group or $F\cong\dbZ_2$. Therefore
\beq
H_q^{Or(G)}(\ast_{\vcyc};\mathbb{K})\cong H_q^{Or(G)}(\ast_{FbC};\mathbb{K})\cong H_q^{Or(G)}(\ast_{\fin};\mathbb{K}),
\eeq
whenever $q\leq 1$.\\
We can actually work with a smaller family of subgroups. First
note that by the same reasoning as in the proof of Lemma \ref{MNM},
every finite maximal subgroup of $G$ is of the form 
$G_z$, $z\in\underline{E}G=\mathbb{H}\times\cdots\times\mathbb{H}$,
the $n$-fold product of upper half-spaces. Hence every 
finite subgroup different from $\lbrace\pm I\rbrace$ is
contained in a unique finite maximal subgroup. Let 
$M\fin$ as defined in Theorem \ref{reducemax}, note that every element in $M\fin$ is a maximal finite subgroup, or the trivial subgroup or the group $\{I,-I \}$. By Theorem
\ref{reducemax} we have
\beq
H_q^{Or(G)}(\ast_{\fin};\mathbb{K})\cong H_q^{Or(G)}(\ast_{M\fin};\mathbb{K}).
\eeq
Now, using the Five Lemma and the previous reductions of the family of subgroups, it is straightforward to show that, for $q\leq 1$
\beq
Wh_q(G)\cong H_q^{Or(G)}(\ast_{All},\ast_{Tr};\mathbb{K})\cong H_q^{Or(G)}(\ast_{ M\fin},\ast_{Tr};\mathbb{K}).
\eeq
We now analyze the $p$-chains that appear in $Or(G,M\fin)$ 
\begin{description}
\item[$0$-chains.] These are chains of the form $\{\overline{G/1}\}$, 
$\{\overline{G/\{\pm I\}}\}$ and $\{\overline{G/H}\}$, where $H$ is a finite maximal subgroup. 
\item[$1$-chains.] We have chains of the form $\{\overline{G/1},\overline{G/\{\pm I\}}\}$,
$\{\overline{G/1},\overline{G/H}\}$ and $\{\overline{G/\{\pm I\}},\overline{G/H}\}$, with $H$ a finite maximal subgroup.
\item[$2$-chains.] The only $2$-chains that we can form are of the type
$\{\overline{G/1},\overline{G/\{\pm I\}},\overline{G/H}\}$, with $H$ a finite maximal subgroup.
\item[$p$-chains, $p\geq 3$.] There are none because for 
$H,\ K$ maximal every morphism $G/H\to G/K$ has to be an isomorphism
\end{description}
Also notice that by considering the pair
$(\ast_{M\fin},\ast_{Tr})$ we are neglecting terms coming
from $p$-chains whose least element is the $0$-chain $\{\overline{G/1}\}$. Particularly, no $2$-chains have to be considered.
Thus the $E^1$ term of $p$-chain spectral sequence for pairs will 
only have contributions from orbits in $Or(G, M\fin^0)$, where
$M\fin^0:=M\fin-Tr$.\\
It is clear that $Or(G, M\fin^0)$ is equivalent to 
$Or(\philbert,M\fin)$. Thus the $E^1$-term of the $p$-chain
spectral sequence (for pairs) in $Or(G, M\fin)$ is isomorphic 
to the $E^1$-term of the
$p$-chain spectral sequence in $Or(\philbert,M\fin)$. Since the former
converges to $Wh_*(G)$ and the latter
converges to $K_*(\dbZ[\philbert])$ the result follows.
\end{proof}
\begin{proof}[\textbf{Proof of theorem \ref{rational}}]
Let $G=\philbert$. Recall that $M\fin$ from Theorem \ref{reducemax} is the family
of subgroups of $G$ consisting of all maximal finite subgroups together with the
trivial subgroup. In \cite[Theorem 5.6]{Gr08}, it is proven that for any group $\Gamma$ that satisfies the Farrell-Jones
conjecture
\beq
H_*^{Or(\Gamma)}\left(\ast_{\fin};\dbK\right)\otimes\dbQ\cong 
H_*^{Or(\Gamma)}\left(\ast_{\vcyc};\dbK\right)\otimes\dbQ\cong K_*(\dbZ[\Gamma])\otimes\dbQ.
\eeq
Now we use Theorem \ref{reducemax} to reduce the family from $\fin$ to $M\fin$. We start
analyizing the $p$-chains that appear in $Or(G,M\fin)$. This has been
done already in the proof of Theorem \ref{philbert}. Note, though, that this time
we are not computing the homology of a pair, so we do need
to consider $0$-chains and
$1$-chains (there are no $p$-chains in this category for $p\geq 2$.) 
The first page of the $p$-chain spectral sequence is given (after some
simplifications, see \cite[Proposition 12]{BJPP01}) by
\begin{align*}
E^1_{0q}&=H_q(BG;\calK(\dbZ))\oplus\bigoplus_{(M)}K_q(\dbZ[M]))\\
E^1_{1q}&=\bigoplus_{(M)}H_q(BM;\calK(\dbZ))\\
E^1_{pq}&=0\text{  for }p\neq 0,1,
\end{align*}
where $H_q(\_ \ ;\mathcal{K}(\dbZ))$ refers to the generalized homology theory with coefficients in the Pedersen-Weibel
$K$-theory spectrum and the sum
runs over conjugacy classes of maximal finite subgroups.
Thus the $p$-chain spectral sequence in this case looks like this:
\begin{figure}[!h]
\vspace*{0cm}
    \begin{center}
    \includegraphics[scale=1]{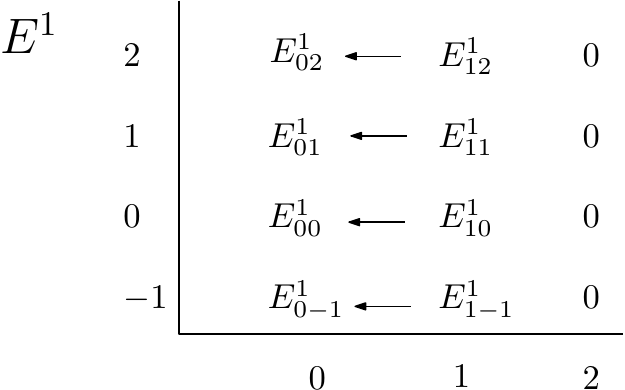}
    \end{center}
 \end{figure}
\par The differentials $d^1:E^1_{1q}\to E^1_{0q}$ are all induced 
by two types of maps between 
$p$-chains: Those arising from $\{\overline{G/1},\overline{G/M}\}\mapsto\{\overline{G/1}\}$ and those that come
from $\{\overline{G/1},\overline{G/M}\}\mapsto\{\overline{G/M}\}$. It is shown in \cite[Lemma 3.10]{pearson98}
that in the first case the induced map in homology is
inclusion whereas the second map induces the classical assembly map
$H_q(BM;\calK(\dbZ))\to K_q(\dbZ[M])$ which in our notation is $$A_{Tr,All}:H^{Or(M)}_q(*_{Tr};\dbK)\to H^{Or(M)}_q(*_{All};\dbK).$$ In our case
this assembly map is known to be rationally injective for all $q$ 
(see \cite{BHM}). Hence, at least rationally the differentials $d^1$
are injective and the spectral sequence (rationally) collapses at $E^2$. Then
\beq
\text{rk }(K_q(\dbZ[G]))=\text{rk }(E^{\infty}_{0q})=\text{rk }(E^2_{0q})
\eeq
Note that $E^2_{0q}$ fits in an exact sequence:
\beq
0\to E^1_{1q}\otimes\dbQ\xrightarrow{d^1\otimes id_{\dbQ}}E^1_{0q}\otimes\dbQ\to 
E^2_{0q}\otimes\dbQ\to 0.
\eeq
Finally, by taking into account that every finite subgroup of the Hilbert modular
group $G$ is cyclic, an Atiyah-Hirzebruch spectral sequence calculation shows
that 
\beq
\text{rk }(H_q(BM;\mathcal{K}(\dbZ))=
\begin{cases}
1 &\text{ if }q=0\text{ or }q\equiv 1\text{ mod 4}, q>2\\
0 &\text{  } \text{otherwise}
\end{cases}
\eeq
Hence,
\begin{align*}
\text{rk }(K_q(\dbZ[G]))&=\text{rk }(E^2_{0q})\\
						  &=\text{rk }(E^1_{0q})-\text{rk }(E^1_{1q})\\
						  &=\displaystyle\sum_{(M)}\text{rk }(K_q(\dbZ[M]))
						  +\text{rk }(H_q(BG;\mathcal{K}(\dbZ))-
\displaystyle\sum_{(M)}\text{rk }(H_q(BM;\mathcal{K}(\dbZ)).
\end{align*}
Thus we have
\nbeq\label{rk}\tag{**}
\text{rk }(K_q(\dbZ[G]))-\text{rk }(H_q(BG;\mathcal{K}(\dbZ))=
\begin{cases}
\displaystyle\sum_{(M)}\text{rk }(K_q(\dbZ[M]))-m &\text{ if }q=0\text{ or }q\equiv 1\text{ mod 4}, q>2\\
\displaystyle\sum_{(M)}\text{rk }(K_q(\dbZ[M]))&\text{  }\text{otherwise},
\end{cases}
\neeq
where $m$ denotes the number of conjugacy classes of maximal finite subgroups of $G$.
By results of Bass \cite{bass}, Carter \cite{carter} and Jahren \cite{Ja09} on the
$K$-theory of finite groups, we have the following equalities:
\beq
\text{rk }(K_q(\dbZ[M]))=
\begin{cases}
r(M)&\text{ if }q>2\text{ and } q\equiv 1\text{ mod }4\\
c(M)&\text{ if }q>2\text{ and } q\equiv 3\text{ mod }4\\
r(M)-q(M)&\text{ if } q=1\\
1&\text{ if } q=0\\
1-q(M)+\displaystyle\sum_{p||M|}(k_p(M)-r_p(M))&\text{ if } q=-1\\
0   &\text{    } \text{ otherwise}.
\end{cases}
\eeq
The result then follows by substituting in the equation \eqref{rk}.
\end{proof}
\begin{proof}[Proof of Corollary \ref{WhSL}]
Recall that by Theorem \ref{hilbert} $Wh_1(\hilbert)\simeq K_1(\philbert)$.
Now notice that the differential of the spectral sequence that appears in
the proof of Theorem \ref{rational}, is injective for $q\leq 1$, see \cite[Proof of Proposition 14]{BJPP01}. This implies that 
$K_1(\philbert)=E^{\infty}_{01}=E^{2}_{01}$ and then we obtain an exact
sequence
\beq
0\to E^1_{11}\xrightarrow{d^1}E^1_{01}\to K_1(\philbert)\to 0.
\eeq
Following \cite[Proof of Proposition 14]{BJPP01} is straightforward to check  that
\beq
E^{1}_{01}=\philbert^{ab}\oplus\Z_2\oplus\bigoplus_{(M)} K_1(\dbZ[M])
\eeq
and also
\beq
E^{1}_{11}=\bigoplus_{(M)} (M\oplus\dbZ_2).
\eeq
Since $M$ is abelian, $K_1(\dbZ[M])\simeq M\oplus\dbZ_2\oplus Wh_1(M)$ and
the image of the assembly map $H_1(BM;\calK(\dbZ))\simeq M\oplus\dbZ_2\to
K_1(\dbZ[M])$ splits for each $M$. This gives us the desired
formula for the classical Whitehead group:
\beq
Wh_1(\hilbert)\simeq Wh_1(\philbert)\oplus\philbert^{ab}\oplus\Z_2.
\eeq
\par Similarly we obtain an exact sequence
\beq
0\to E^1_{10}\xrightarrow{d^1}E^1_{00}\to K_0(\philbert)\to 0,
\eeq
where
\beq
E^1_{10}=\displaystyle\bigoplus_{(M)}H_0(BM;\calK(\dbZ))\simeq
\displaystyle\bigoplus_{(M)}\dbZ
\eeq
and
\beq
E^1_{00}\simeq\dbZ\oplus\displaystyle\bigoplus_{(M)}K_0(\dbZ[M]).
\eeq
Note that 
$K_0(\dbZ[M])\simeq\dbZ\oplus Wh_0(M)$ and $Wh_0(M)$ is finite because $M$ is 
finite. The result then follows by noticing
that the assembly map 
$H_0(BM;\calK(\dbZ))\simeq\dbZ \to K_0(\dbZ[M])\simeq\dbZ\oplus Wh_0(M)$
is split injective for each $M$.
\par The same analysis at the $-1$-st level yields $Wh_{-1}(\hilbert)\simeq\bigoplus K_{-1}(\dbZ[M])$. This completes the proof of the corollary.
\end{proof}
\par We conclude this section working out a concrete example.
Let $k=\dbQ(\sqrt{5})$ the quadratic extension of the rational numbers obtained 
by adjoining $\sqrt{5}$ to $\dbQ$. In this case, $PSL_2(\calO_k)$ acts
on $\dbH\times\dbH$ via the embeddings induced by 
$\sigma_1:\sqrt{5}\mapsto\sqrt{5}$
and $\sigma_2:\sqrt{5}\mapsto -\sqrt{5}$. By Theorem \ref{philbert}, 
the Whitehead groups
of $PSL_2(\calO_k)$ are determined by its maximal finite subgroups. Hence
our problem reduces to finding all conjugacy classes of maximal finite subgroups.
By Lemma \ref{MNM}, every maximal finite subgroup of $PSL_2(\calO_k)$ appears
as a stabilizer of some point in $\dbH\times\dbH$. We will say that two fixed
points in $\dbH\times\dbH$
are inequivalent if their isotropy groups are not conjugate. Thus we have a
bijection between conjugacy classes of maximal finite subgroups of 
$PSL_2(\calO_k)$ and the number of inequivalent fixed points in
$\dbH\times\dbH$. The problem of finding the number of fixed points of the action of the Hilbert modular group of a quadratic number field has been addressed
in \cite{prestel}. We sketch here the way to proceed: since only elliptic elements
of $PSL_2(\calO_k)$ can have fix points, then the trace of such an element
must satisfy
\beq
[Tr\: \sigma_i(h)]^2-4<0,\ \ \ \ \ i=1,2.
\eeq
Also, every elliptic matrix is conjugate (in $PSL_2(\dbC)$ ) to one of the form 
$\begin{pmatrix}
\omega & 0\\
0 & \overline{\omega}
\end{pmatrix}$
where $\omega$ is a primitive root of unity.
Thus, in our case, each of these traces should be an algebraic integer of 
$\dbQ(\sqrt{5})$, i.e. an element of $\dbZ[\frac{1+\sqrt{5}}{2}]$. The only possibilities are:
$0$, $\pm 1$ and $\pm\frac{1\pm\sqrt{5}}{2}$, corresponding to a 4th, 6th and 10th root of 
unity respectively. This says that in $PSL_2(\calO_k)$ we will only have elliptic elements 
of order $2,3$ and $5$, consequently the only finite subgroups that can appear will be isomorphic
to $\dbZ_2$, $\dbZ_3$ and $\dbZ_5$. The number of conjugacy classes of these maximal finite subgroups
is calculated in \cite{prestel}: each of these groups has exactly two conjugacy classes in $PSL_2(\calO_k)$.
Therefore, by Theorem \ref{philbert} we have
\beq
Wh_q(PSL_2(\calO_{\dbQ(\sqrt{5})}))=Wh_q(\dbZ_2)^2\oplus Wh_q(\dbZ_3)^2\oplus Wh_q(\dbZ_5)^2.
\eeq
In the special case of the classical Whithead group, i.e. $q=1$, it is known (See \cite{Ol88}) that $Wh(\dbZ_2)\simeq Wh(\dbZ_3)\simeq 0$ and $Wh(\dbZ_5)\simeq\dbZ$. Therefore
\beq
Wh(PSL_2(\calO_{\dbQ(\sqrt{5})}))\simeq\dbZ\oplus\dbZ.
\eeq
Also, by Corollary \ref{WhSL} we have
\beq
Wh(SL_2(\mathcal{O}_{\dbQ(\sqrt{5}}))\simeq 
\dbZ\oplus\dbZ\oplus\dbZ_2,
\eeq
since $PSL_2(\calO_{\dbQ(\sqrt{5})})$ is a perfect group.
\par Finally for the higher $K$-theory we obtain (here $G=PSL_2(\calO_{\dbQ(\sqrt{5})})$):
\begin{align*}
\text{rk }K_q(\dbZ[G])-\text{rk }H_q(BG;\calK(\dbZ))&=
\begin{cases}
2r(\dbZ_2)+2r(\dbZ_3)+2r(\dbZ_5)&\hspace{-1.8cm}\text{ if }q>2\text{ and }q\equiv 1\text{ mod }4\\
2c(\dbZ_2)+2c(\dbZ_3)+2c(\dbZ_5)&\hspace{-1.8cm}\text{ if }q>2\text{ and }q\equiv 3\text{ mod }4\\
2(r(\dbZ_2)-q(\dbZ_2))+2(r(\dbZ_3)-q(\dbZ_3))\\
+2(r(\dbZ_5)-q(\dbZ_5))&\text{ if }q=1
\end{cases}\\
&=
\begin{cases}
4+4+6&\text{ if }q>2\text{ and }q\equiv 1\text{ mod }4\\
0+2+4&\text{ if }q>2\text{ and }q\equiv 3\text{ mod }4\\
0+0+2&\text{ if }q=1.
\end{cases}\\
&=
\begin{cases}
14&\text{ if }q>2\text{ and }q\equiv 1\text{ mod }4\\
6&\text{ if }q>2\text{ and }q\equiv 3\text{ mod }4\\
2&\text{ if }q=1.
\end{cases}
\end{align*}
\begin{remark}
It is worth noticing that the formula of Corollary \ref{WhSL} also provides a 
calculation for the Whitehead group of $SL_2(\dbZ)$:
\begin{align*}
Wh_1(SL_2(\dbZ))&\simeq Wh_1(PSL_2(\dbZ))\oplus PSL_2(\dbZ)^{ab}\oplus\dbZ_2\\
				 &\simeq Wh_1(\dbZ_2\ast\dbZ_3)\oplus\dbZ_6\oplus\dbZ_2\\
				 &\simeq \dbZ_6\oplus\dbZ_2,
\end{align*}
where we have used the fact that 
$Wh_1(\dbZ_2\ast\dbZ_3)\simeq Wh_1(\dbZ_2)\oplus Wh_1(\dbZ_3)\simeq 0$.
\end{remark}
{\footnotesize\bibliographystyle{alpha} 
\bibliography{myblib}}

\end{document}